\newtheorem{theorem}{Theorem}[section]
\newtheorem{lemma}[theorem]{Lemma}
\newtheorem{proposition}[theorem]{Proposition}
\newtheorem{corollary}[theorem]{Corollary}
\theoremstyle{definition}
\newtheorem{remark}[theorem]{Remark}
\theoremstyle{remark}
\newcommand{\R}{\mathbb{R}}
\newcommand{\N}{\mathbb{N}}
\newcommand{\Z}{\mathbb{Z}}
\newcommand{\e}{\varepsilon}
\newcommand{\f}{\varphi}
\DeclareRobustCommand{\rchi}{{\mathpalette\irchi\relax}}
\newcommand{\irchi}[2]{\raisebox{\depth}{$#1\chi$}}
\newcommand{\res}{\mathop{\hbox{\vrule height 7pt width .5pt depth 0pt
\vrule height .5pt width 6pt depth 0pt}}\nolimits}
\newcommand{\nor}[1]{\left\| #1 \right\|} %
\newcommand{\norh}[1]{\nor{#1}_{\dot{H}^{\frac 12}}}
\newcommand{\weak}{\rightharpoonup}
\newcommand{\weakstar}{\stackrel{*}{\rightharpoonup}}
\newcommand{\AD}{\mathcal{AD}}
\newcommand{\floor}[1]{\lfloor #1 \rfloor}
\title
[Uniform distribution of dislocations at semi-coherent interfaces]{Uniform distribution of dislocations in  Peierls-Nabarro models for semi-coherent interfaces}
\author[S. Fanzon]
{Silvio Fanzon}
\address[Silvio Fanzon]{(Corresponding author) University of Graz, Institute of Mathematics and Scientific Computing, Heinrichstra\ss e 36, 8010 Graz, Austria}
\email[S. Fanzon]{Silvio.Fanzon@uni-graz.at}
\author[M. Ponsiglione]
{Marcello Ponsiglione}
\address[Marcello Ponsiglione]{Dipartimento di Matematica ``G. Castelnuovo",  Sapienza Universit\`a di Roma,
Piazzale A. Moro 2, 00185 Roma, Italy} \email[M. Ponsiglione]{ponsigli@mat.uniroma1.it}
\author[R. Scala]
{Riccardo Scala}
\address[Riccardo Scala]{Dipartimento di Ingegneria dell'Informazione e Scienze Matematiche, San Niccol\`o, via Roma 56,
53100 Siena, Italy} \email[R. Scala]{riccardo.scala@unisi.it}
\begin{document}
\vskip .2truecm

\begin{abstract}
\small{In this paper we introduce  Peierls-Nabarro type models for edge dislocations at  {\it semi-coherent} interfaces between two heterogeneous crystals, and prove the optimality of uniformly distributed {\it edge dislocations}.
Specifically, we show that the elastic energy $\Gamma$-converges to  a limit functional comprised of two contributions: 
one  is given by a constant $c_\infty>0$ gauging the minimal energy induced by dislocations at the interface, 
and corresponding to a uniform distribution of edge dislocations;
the other one
accounts for the far field elastic energy induced by the presence of further, possibly not uniformly distributed, dislocations. 
After assuming periodic boundary conditions and formally considering the limit from  semi-coherent to coherent interfaces, we show that $c_\infty$ is reached when dislocations are evenly-spaced on the one dimensional circle.

\vskip .3truecm \noindent Keywords: Variational models, Dislocations, Interface boundaries.
\vskip.1truecm \noindent 2000 Mathematics Subject Classification: 74N05, 74N15, 49J45.
}
\end{abstract}

\maketitle

\vskip -.5truecm
{\small \tableofcontents}

\section*{Introduction}

In this paper we provide a rigorous derivation of  the uniform
distribution 
of dislocations at flat interface boundaries between heterogeneous two-dimensional crystals
whose atomic lattice spacings differ slightly. 
Such configurations are  referred to as {\it semi-coherent} interfaces. 
To this purpose, we propose a  variational model  based on the classical Peierls-Nabarro and Van der Merwe models \cite{N47,P40, VDM}. 
		Our approach consists in minimizing the $H^{\frac 12}$  seminorm \begin{equation}\label{enefra}
	\int_0^l \int_0^l\frac{|u(x)-u(y)|^2}{|x-y|^2} \, dx \, dy\,,
	\end{equation}
	among admissible displacements $u \in W^{1,\infty}(0,l)$, where $l>0$ represents the size of the interface. In this simplified model, the classical Peierls-Nabarro sinusoidal  potential is removed from the energy and replaced by   suitable admissibility conditions on the function $u$: Loosely speaking, $u'$ can assume only two values (determined by the lattice misfit) on intervals whose size is proportional to the lattice spacing.

It is well known  that atomic mismatches are accommodated by  periodic array of dislocations, whose presence decreases the energy of the system. This is common of most of the interface boundaries such as small angle grain boundaries, tilt and twist boundaries \cite{ShRe, VDM}. This is why much effort has been spent in computing the elastic energy induced by periodic distribution of dislocations; we refer to the monograph \cite[Sec 3.3]{NAB} for a comprehensive overview.
A relevant theoretical question is to understand optimal configurations of dislocations without assuming their periodicity:
Rigorous proofs that dislocations are favorable with respect to purely elastic deformations for large interfaces, as well as energy scaling properties 
have been recently faced  in a variety of physical systems related to grain boundaries and epitaxial growth, starting from discrete or semi-discrete models of dislocations \cite{ALP2, ALP,  FPP, FPP2,FFLM,stefanelli,GT, Hirsch, piovano,LL, LPS, mp}. The goal of this paper is to analyze the simplified version of the Peierls-Nabarro model, based on the minimization of the energy in \eqref{enefra}, without assuming any periodicity on $u$. 

We now introduce and derive our model in detail; we consider two lattices $C^+$ and $C^-$, separated by the $x$-axis of $\R^2$, with $C^+$ lying on top of $C^-$. For simplicity we assume that $C^-$ is rigid and in equilibrium, with lattice spacing equals to $\Delta/2$, while $C^+$ is spaced with $\delta/2$, where $0<\delta<\Delta$. We consider the case of a semi-coherent interface, which corresponds to $\delta \approx \Delta$. We analyze the equilibrium conditions for $C^+$, which amounts to study the interfacial displacement $u \colon \frac{\delta}{2} \Z \to \R$, corresponding to the trace of the full strain at the interface. We are interested in a continuous description of this model, hence we consider a suitable affine extension of $u$, imposing that $u' \in \{\lambda, -\Lambda\}$ where $0<\lambda \ll \Lambda$. The region where $u'=\lambda$ corresponds to a purely elastic deformation, yielding a perfect interfacial match between $C^-$ and $C^+$. In contrast $u'=-\Lambda$ describes a dislocation core, corresponding in the atomistic picture to the presence of an extra line of atoms, i.e., an {\it edge dislocation} (see Figure \ref{fig:reference}). For a more precise description of the model we refer to Section \ref{sec:derivation}. 
In this paper %
we assume that the elastic energy is exactly proportional to the 
$H^{\frac12}$ semi-norm of $u$; this is in contrast with the fact that the elastic energy should depend only on the symmetric part of the strain: indeed our choice should be understood as a mere mathematical simplification.

Minimizing \eqref{enefra} among all functions $u$ with $u'\in \{\lambda,-\Lambda\}$, but without any constraint on the size of $\{ u' =-\Lambda\}$, yields oscillating  minimizing sequences converging uniformly to zero and with vanishing energy. 
However, the underlying atomic structure imposes further restrictions on $u$, namely that the regions where $u'=-\Lambda$ have a minimal length, the {\it core radius} $\delta>0$, which is proportional to the lattice spacing. 
Keeping memory of  this important microscopic constraint %
somehow fixes the frequency of oscillations, leading to a well posed minimization problem.   

Since we are interested in the asymptotic behavior of \eqref{enefra} as $l \to +\infty$, we first observe that the minimal energy diverges with order $l$.
Therefore, in order to obtain a meaningful $\Gamma$-convergence result for \eqref{enefra}, we rescale the energy by $l$ and introduce the rescaled functions $w_u\in H^{\frac 12}(0,1)$, defined  by
$$w_u(x):=\frac{u(xl)}{\sqrt{l}}.$$
The energy \eqref{enefra}, scaled by $l$, reads as
\begin{align} \label{enefra2}
 \int_0^1 \int_0^1\frac{|w(x)-w(y)|^2}{|x-y|^2} \, dx \, dy\,,
\end{align}
completed with the constraints described above, depending on $l$,  hidden in the definition of the class of admissible displacements given in \eqref{Dis},  \eqref{wl}.

Our main result is Theorem \ref{mainthm}, which provides the $\Gamma$-limit of the energy functional \eqref{enefra2} as $l\to +\infty$. 
More precisely, the limit functional coincides with
\begin{align}\label{result}
c_\infty +   \int_0^1 \int_0^1\frac{|w(x)-w(y)|^2}{|x-y|^2} \, dx \, dy\,,
\end{align}
where $c_\infty>0$ is a specific constant and $w$ belongs to $H^{\frac 12}(0,1)$, without further constraints.   
In other words, the limit energy splits into two finite contributions: The first, namely the constant $c_\infty$, accounts for the minimal energy induced by the dislocations, which are needed to accommodate the interfacial lattice mismatch. Such dislocations are infinitely many and homogeneously distributed on the interval $(0,1)$ (see Theorem \ref{2.6}). Indeed, the specific value of $c_\infty$ is obtained as the limit as $l\rightarrow+\infty$ of constants $c_l$ defined by %
\begin{align}
 c_l:=\min_u \frac{1}{l} \int_0^l \int_0^l\frac{|u(x)-u(y)|^2}{|x-y|^2} \, dx \, dy\,,
\end{align}
where $u$ is subjected to the usual constraints (see Theorem \ref{thm:cl}). 
The second term in \eqref{result} is induced by the possible presence of further, possibly non uniformly distributed dislocations, inducing a far macroscopic strain. 
It would be desirable to compute
the constant $c_l$ for fixed $l$. However this seems to be out of reach, in particular due to boundary effects at the endpoints of the interval $(0,l)$, which prevent periodic configurations of dislocations. These boundary effects become negligible as $l \to + \infty$; indeed in Theorem \ref{2.6} we show that, for minimizers,  the density of dislocations becomes uniform as $l\rightarrow+\infty$. A further step, which at the present is still missing, would consist in proving real periodicity of the  dislocations  (in the limit as $l\to +\infty$), and therefore that the constant $c_\infty$ agrees with the surface energy density computed in \cite{VDM}.

Finally, we have evidence of the periodic distribution of dislocations for an even more simplified setting, where we assume periodic boundary conditions, fix the number of dislocations, and send $\Lambda\to +\infty$. Roughly speaking, this process  corresponds to consider the limit from semi-coherent to coherent interfaces. More precisely,  in Section \ref{periodicity} we fix $l$ (and thus the number of dislocations) and we recast our functional \eqref{enefra} as defined on $\mathcal{S}^1$ instead of $(0,l)$, thus neglecting the boundary effects. Enforcing the constant $\Lambda\rightarrow+\infty$ and introducing a fixed core-radius $\rho>0$,  we show that the optimal positioning of the dislocations is exactly given by evenly-spaced points on the circle $\mathcal S^1$. This result is proved in Theorem \ref{thm:periodicity}. The elastic energy given by such a configuration corresponds, in this modified setting, 
to the limit of $c_l$ as $l \to +\infty$ and $\frac \delta\Delta \to 0$ simultaneously and in order to keep the  number of dislocations  bounded. 

From a purely mathematical perspective, let us mention that the understanding of periodic configurations as symmetry breaking minimizers of non convex energy functionals is a fascinating and very active research field. A significant impulse to this subject was given by the work in \cite{Mu}. Subsequently, much effort has been devoted in seeking periodic minimizers of Ohta-Kawasaki energy functionals \cite{OK}.  To some extent, our energy can be regarded as a variant of Ohta-Kawasaki energy-type functionals where the $H^{-1}$ norm is replaced by the $H^{-\frac 12} $ norm (see \eqref{OK}), and such a variant was already mentioned as relevant in many respects in \cite{Mu}.%

The energy in \eqref{enefra2} could also be seen as a Modica-Mortola type functional where the Dirichlet term is replaced by the $H^{\frac 12}$ seminorm
	\cite{Alberti,DPV15,FG07, Garroni05, Garroni06, GM12, PV15}.
Here the main difference is that our energy functional formally corresponds to a fractional Modica-Mortola type energy, keeping into account a pre-existing strain, which naturally arises as a consequence of the interfacial lattice mismatch, see \eqref{MoMo}.

While in this paper we enforce the scale of the oscillations by imposing suitable constraints on the class of admissible displacements, in the spirit of the so called {\it core radius approaches}, it would be interesting to study relaxed energies where the length-scale is coerced through scale parameters whose purpose is to tune penalizing potentials; such an approach is more closely related to classical Ohta-Kawasaki and Modica-Mortola energies, and more adherent to the Peierls-Nabarro model. Specifically, the asymptotic behaviour of the functionals introduced in \eqref{MoMo} and \eqref{OK} as $\e\to 0$ deserves, in our opinion, further investigation.

\section{Heuristic derivation and related models} \label{sec:derivation}
In this section we present our model and provide its heuristic derivation from basic semi-discrete models in elasticity. 
\subsection{Semi-coherent interface}
Consider two square lattices $C^\pm$ with different lattice spacing occupying the lower and upper half-plane, respectively. More precisely, let $\tau>0$,  let $0<\delta <\Delta$, and let
$$
C^-:= \frac{\Delta}{2} (\Z \times (-\N)) -(0,\tau), \qquad C^+:= \frac{\delta}{2} (\Z \times \N). 
$$
Here $\frac{\Delta}{2}$ and  $\frac{\delta}{2}$ are the lattice spacing of $C^-$ and $C^+$, respectively, and the convenience of the prefactor $\frac{1}{2}$ will be commented later on (Figure \ref{fig:reference}). We are assuming higher density for the upper crystal $C^+$; nothing would change in our considerations if we assume  instead that  $C^+$ has lower density than $C^-$. Clearly, the case of a single crystal corresponds to $\delta=\Delta=\tau$.

The first mathematical simplification in our model consists in  assuming that $C^-$ is rigid, while  $C^+$ has a  linear elastic behavior. 
Our approach consists in focussing on the position of the atoms on $C^+$ lying on the $x$-axis $\{y=0\}$, which in turn determines the position of all the other atoms in $C^+$ by elastic energy minimization. 
More precisely, we assume that 
each  atom $p$ lying on the $x$-axis,  can move only along the $x$-axis, and positions itself, after being displaced,  either on top of some atom of $C^-$, or in the middle of two adjacent atoms in $C^-$ (see Figure \ref{fig:reference}). The latter case represents an edge dislocation in our model. Moreover, in order to restore the lattice structure where a dislocation is present, we need to somehow complete the dislocation: we assume that two adjacent atoms cannot both be dislocation points; in this way, before and after each dislocation point we have a perfect matching between the two lattices.    We also assume that the deformation preserves orientation and that the distance between two adjacent atoms remains strictly larger than $0$ and smaller than $\frac{\Delta}{2}$. 

It is convenient to describe the deformed configurations of the atoms on the axis $\{y=0\}$ through a displacement function 
$u:\frac{\delta}{2}\Z\to \R$. For what has been said so far, we have that if $p$ and $q$ are two adjacent points  on the $x$-axis,  $p$ to the left of $q$, then $u(q)- u(p)$ can be either $(\Delta -\delta)/2$ or $ \frac{\Delta}{4}- \frac{\delta}{2}$ (see Figure \ref{fig:reference}). If we consider the piece-wise affine extension of $u$ on the whole $\R$, this means that 
$$
u' \in \left\{ \lambda:=   \frac{\Delta -\delta}{\delta}, \, -\Lambda:=  \frac{\Delta}{2 \delta }-1 \right\}.
$$ 
\begin{figure}[t!]
\centering   
\def\svgwidth{14.5cm}   
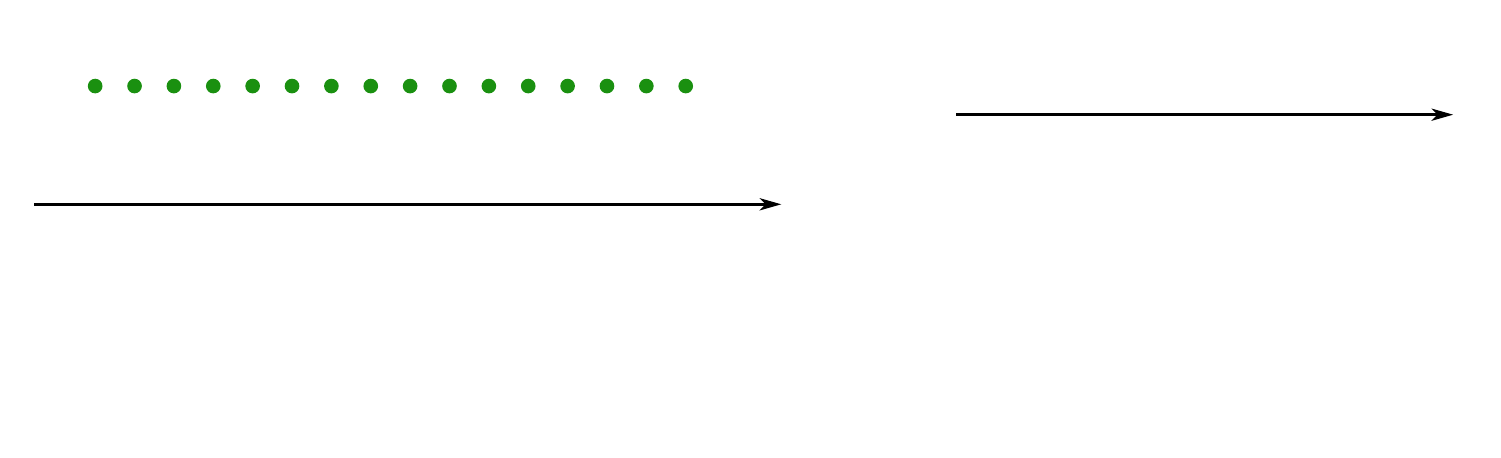 
\caption{Left: reference configuration. Top Right: purely elastic deformation, with relative displacement $(\Delta -\delta)/2$. Bottom Right: deformation leading to an edge dislocation, with relative displacement $\frac{\Delta}{4}- \frac{\delta}{2}$.} 
\label{fig:reference}   
\end{figure}
Assuming that the interface between the two crystals is semi-coherent, namely that $\delta\approx \Delta$, yields $0<\lambda \ll 1$, while $\Lambda \approx \frac 12$.
We also notice that the minimal interval where $u'$ is constant has length equal to $\frac{\delta}{2}$, and in fact $\delta$ if $u'= -\Lambda$. 
In our model we will partially keep memory of this important fact, enforcing that the region where $u'= -\Lambda$ is given by  a  disjoint union of intervals $(x_i - \frac{\delta}{2}, x_i + \frac{\delta}{2})$ of size $\delta$; here the points $x_i$ represent edge dislocations, while $\delta$ is proportional to the lattice spacing, as well as to the size of the Burgers vector and of the core region. In order to relax the stored elastic energy towards its ground state, we expect to deal with zero average displacements $u$; 
this fact, together with assumption $\lambda\ll \Lambda $, yields that  the average distance between dislocations is much larger than $\delta$. In this respect, the information that the region where $u'=\lambda$ is union of intervals of size $\frac{\delta}{2}$ seems to be less relevant, and will be neglected in our model. 
\subsection{The stored elastic energy}
We pass to describe the elastic energy stored in the crystal, in the continuous framework of linearized elasticity. 
Given a displacement $u$ defined on the $x$-axis, one should consider as admissible any displacement $U:\R\times \R^+\to \R^2$ agreeing with $u$ on the $x$-axis. 
Then, %
the stored elastic energy density would be a quadratic form $\langle \mathbb C E_U:E_U\rangle$ where $\mathbb C$
denotes the fourth-order elasticity tensor, and $E_U$ is the symmetrized gradient of $U$.  Instead, here we make the second relevant mathematical simplification of the model (after having assumed $C^-$ rigid):  
we replace the canonical elastic energy density with the simpler  Dirichlet one, given by $| D U|^2$. 
Formally, the energy stored in the crystal is then 
\begin{equation} \label{intro:en_heur}
E(U)= \int_{\R\times \R^+} |D U|^2 \, dz.
\end{equation}
This energy is, in principle, unbounded. However let us ignore this fact for a while and formally minimize \eqref{intro:en_heur} with respect to all $U$ compatible with $u$. This procedure yields the minimal energy induced by $u$, and in turn by the given configuration of dislocations. The minimizer $U_{\rm min}$ has only horizontal component, i.e., $U_{\rm min}=(U_1, 0)$ and, still formally,   
$$
E(U_{\rm min}) = \int_{\R\times \R^+} |D U_1 |^2 \, dz = c \|u\|_{\dot{H}^{1/2}}^2 = c \int_\R \int_\R \frac{|u(x)-u(y)|^2}{|x-y|^2} \, dx \, dy\,,
$$
where $c$ is a suitable pre-factor (see \cite{MP04}). Clearly, in view of our constraints on $u$, such an energy is always infinite. Therefore, we introduce a length-scale $l>0$, and for 
$u:(0,l)\to \R$ define the corresponding elastic energy functionals
$$
E^l(u) = \int_0^l \int_0^l\frac{|u(x)-u(y)|^2}{|x-y|^2} \, dx \, dy\,.
$$
The above energy is finite, and diverges with order $l$ as $l\to +\infty$. Eventually we re-scale $E^l$, multiplying it by $\frac{1}{l}$, and consider its limit in the sense of $\Gamma$-convergence as $l\to + \infty$.

\subsection{Comparison with Peierls-Nabarro models}

In order to establish a comparison between our model and Peierls-Nabarro type models, we need to assume that we are dealing with a single crystal, i.e., $\delta=\Delta$. In such case, we have $\lambda=0$, while $\Lambda = \frac 12$. Let moreover $u:\R\to \R$ be the prescribed displacement function at the $x$-axis. Up to an additive constant, we can always assume that $u(0)=0$. Therefore, the function $u$ is made of affine pieces, where $u'= - \Lambda$, and flat regions, where $u$ takes values in ${\delta}\Lambda \Z = \frac{\delta}{2}\Z$.
Again, the regions where $u'= - \Lambda$ can be identified with the dislocation cores, and $u'\equiv -  \Lambda$ can be understood as a plastic strain. As we will see in Theorem \ref{2.6}, dislocations arise as energy minimizers, and are induced only by the lattice misfit: In this respect, the corresponding strain $u'\equiv -\Lambda$ is usually referred to as {\it eigenstrain}, as it is produced without external forces; we refer the interested reader to \cite{AO}.
    
Here the model is quite rigid, prescribing exactly the values of the strains in the dislocation cores, and a perfect lattice matching $u(x)\in \frac{\delta}{2}\Z$ outside the cores. 
In this respect, the configurations considered in this model are more rigid with respect to the classical Peierls-Nabarro model \cite{P40}, but consistent with their analysis showing that the size of a dislocation is of the order of few lattice spacings. 
Relaxing these conditions gives back the celebrated Peierls-Nabarro model, where the condition $u(x)\in \frac{\delta}{2}\Z$ is enforced by a potential with wells exactly at $\frac{\delta}{2}\Z$. 

In this respect, our model can be regarded as a Peierls-Nabarro model for heterogeneous crystals, whithin the more rigid formalism of eigenstrains.  

On the other hand, one could also consider models which are more closely related to the Peierls-Nabarro formalism. For instance, given $\e>0$ and noticing that $(\lambda + \Lambda)\delta = \frac{\Delta}{2}$, one could consider the functional
$$
E_\e(u) := \e\| u \|^2_{\dot H^{\frac 12}} + \frac{1}{\e} \int_{\R} \text{dist}^2 \Big(u(x) -\lambda x,\frac{\Delta}{2} \Z \Big) \, dx.
$$
Setting $f(x):= u(x) - \lambda x$, the energy can be written as
\begin{equation}\label{MoMo}
E_\e(f) := \e\| f (x) + \lambda x \|^2_{\dot H^{\frac 12}} + \frac{1}{\e} \int_{\R} \text{dist}^2 \Big(f(x),\frac{\Delta}{2} \Z \Big) \, dx.
\end{equation}
Here $\e$ fixes the length-scale of the transitions, so that for $\e\approx \delta$ we expect that the minimizers of the functionals $E_\e$ behave similarly to 
the ones of our proposed model. If on the one hand these energies $E_\e$ seem the natural counterpart of Peierls-Nabarro functionals for heterogeneous crystals, our proposed model seems, at a first glance, more feasible and  easier to be analyzed.    

\subsection{Comparison with the Ohta-Kawasaki model} \label{ohta}
The Ohta-Kawasaki variational model, in its basic form,  consists in minimizing energy functionals acting on scalar functions $u$ and which are given by the sum of the $H^{-1}$ norm of $u$ and a two wells potential forcing $u$ to take values in $\{\pm 1\}$. Such energy is completed with some extra terms, which prescribe the scale at which the oscillations of $u$ occur. 

In fact, it is well known that minimizers exhibit periodic oscillations; the first rigorous proof of this behavior has been carried out in dimension one in \cite{Mu}. There, it is introduced (up to suitable equivalent reformulation) the following energy 
$$
\|v\|_{H^{-1}}^2 +  \e^2 \|v'\|_2^2 + \|v^2-1\|_2^2,
$$  
and it is proven that, for $\e$ small enough, minimizers are periodic. 

Our model is somehow based on minimization of the analogous energy functional where the $H^{-1}$ norm replaced by the  $H^{-\frac 12}$ one.  In fact, consider the energy
$$
\|v\|_{H^{- \frac 12}}^2 +  \e^2 \|v'\|_2^2 + \|v^2-1\|_2^2.
$$  
Setting $u':=v$ in the above expression leads to the minimization of following functional 
$$
\|u\|_{H^{ \frac 12}}^2 +  \e^2 \|u''\|_2^2 + \|(u')^2-1\|_2^2 \, .
$$  
This energy is strictly related to our model for $\lambda = \Lambda =1$. In fact, the third term forces $u'$ to take values in $\{\pm 1\}$, while the second fixes the scale of transitions between such phases, and therefore, together with the first term, determines their number. 
 In our proposed model, the length and number of transitions is enforced replacing the above penalizations with a constraint on the minimal length of the phases. 
 In this respect, it seems interesting to consider the following relaxed version of our energy, more closely related with the Ohta-Kawasaki formalism 
\begin{equation*}
\|u\|_{H^{ \frac 12}}^2 +  \e^2 \|u''\|_2^2 + \|\text{dist} (u', \{\lambda, \, -\Lambda\})\|_2^2 \, ,
\end{equation*}
or, equivalently
\begin{equation}\label{OK}
\|v\|_{H^{- \frac 12}}^2 +  \e^2 \|v'\|_2^2 + \|\text{dist} (v, \{\lambda, \, -\Lambda\})\|_2^2 \, .
\end{equation}

\section{The mathematical model}

\subsection{Admissible configurations and their energy}
Let $\lambda, \Lambda >0$ with $\Lambda \ge \lambda$; let moreover $\delta >0$ be fixed, and $l\ge 0$. We introduce the family of admissible dislocations as

\begin{equation}\label{Dis}
\begin{aligned}
\AD_l := \Big\{ \{ x_1,\, \ldots, \, x_N \} \subset \Big(- \frac{\delta}{2},l + \frac{\delta}{2} \Big) \text{ with } N\in\N, \,   x_1<x_2 \ldots <x_N,  
\\
\text{ and with } \Big(x_i-\frac{\delta}{2}, x_i+\frac{\delta}{2}\Big)    \text{  mutually disjoint} \Big\}\, . 
\end{aligned}
\end{equation}
Notice that dislocations can fall outside of $(0,l)$, and that  the dislocation set can also be empty, namely $\emptyset \in \AD_l$.
Given $X\in \AD_l$, we consider the corresponding displacement at interface $u_X \in  W^{1,\infty}(0,l)$ determined, up to an additive constant,  by
\begin{equation}\label{uxp}
u_X' = \lambda  -(\Lambda + \lambda ) \chi_{(0,l)} \sum_{i=1}^{N} \rchi_{      (x_i-\frac{\delta}{2}, x_i+\frac{\delta}{2})}.
\end{equation}
The class of admissible displacements is given by 
$$
\mathcal U_l:=\{ u_X, \, X\in \AD_l\}\, .
$$
The energy $E^l(u)$ associated to any $u\in \mathcal U_l$ is nothing but the square of its  ${H}^{\frac 12}$-seminorm:
\begin{align} \label{en:h12}
E^l(u) := \|u\|_{\dot{H}^{1/2}}^2 = \int_0^l \int_0^l\frac{|u(x)-u(y)|^2}{|x-y|^2} \, dx \, dy\,.
\end{align}

Clearly, the energy diverges as $l \to + \infty$, therefore we need a suitable rescaling.
We will now provide an estimate for the energy of a function whose oscillation is controlled by a constant. 

\begin{lemma}\label{lemma3.2}
Given $M>0$ there exists $C>0$ such that, for every $l>1$ and for all $u\in \mathcal U_l$ satisfying
\begin{align}\label{estimateLinfty}
 \max\{|u(x)-u(y)|:x,y\in (0,l)\} \le M,
\end{align}
we have
\[
E^l(u) \leq Cl \,.
\]

\end{lemma}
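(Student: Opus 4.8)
The plan is to split the double integral defining $E^l(u)$ according to whether the two points $x,y$ are close (say $|x-y|\le 1$) or far ($|x-y|>1$), and bound each piece by a constant times $l$.

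\emph{Far regime.} When $|x-y|>1$, the kernel $|x-y|^{-2}$ is bounded by $1$, and by the oscillation bound \eqref{estimateLinfty} the numerator satisfies $|u(x)-u(y)|^2\le M^2$. Hence the contribution of $\{(x,y)\in(0,l)^2 : |x-y|>1\}$ is at most $M^2 \cdot l^2$... which is too crude. The correct bound here uses $\int_{\{|x-y|>1\}} |x-y|^{-2}\,dy \le 2\int_1^\infty s^{-2}\,ds = 2$ for each fixed $x$, so after integrating in $y$ first we get a bound $M^2 \int_0^l 2\,dx = 2M^2 l$, which is $O(l)$ as desired.

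\emph{Near regime.} When $|x-y|\le 1$, we cannot use \eqref{estimateLinfty} directly (it would again give only $O(l)$ if we were lucky, but actually it gives the wrong power unless combined with the kernel integrability, which fails near the diagonal). Instead I would exploit that $u\in\mathcal U_l$ is Lipschitz with a controlled constant: since $u_X' = \lambda - (\Lambda+\lambda)\chi_{(0,l)}\sum_i \rchi_{(x_i-\delta/2,x_i+\delta/2)}$, we have $\|u'\|_{L^\infty(0,l)} \le \Lambda + \lambda =: L$, a constant independent of $l$ and of $X$. Therefore $|u(x)-u(y)|\le L|x-y|$, so on the near regime the integrand is bounded by $L^2$. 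Integrating over $\{(x,y)\in(0,l)^2 : |x-y|\le 1\}$, whose measure is at most $2l$, gives a contribution $\le 2L^2 l$, again $O(l)$.

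\emph{Conclusion.} Adding the two contributions, $E^l(u) \le (2M^2 + 2L^2)\, l$ for every $l>1$ and every admissible $u$ satisfying \eqref{estimateLinfty}, so $C := 2M^2 + 2(\Lambda+\lambda)^2$ works. The one subtlety worth a line of care is the near-diagonal integration: one should write $\int_0^l\!\int_0^l \rchi_{\{|x-y|\le 1\}}\,dx\,dy \le \int_0^l \big(\int_{x-1}^{x+1} dy\big) dx = 2l$, so that the Lipschitz bound is used precisely where the kernel is non-integrable. I do not anticipate a genuine obstacle here; the only thing to get right is pairing the uniform Lipschitz constant of $\mathcal U_l$ with the near-diagonal part and the oscillation hypothesis \eqref{estimateLinfty} with the far part, rather than the other way around.
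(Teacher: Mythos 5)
Your proof is correct and follows essentially the same route as the paper: split $(0,l)^2$ into the near-diagonal strip $\{|x-y|\le 1\}$, handled via the uniform Lipschitz bound on $u'$ (the paper uses the sharp constant $\Lambda$, since $u'\in\{\lambda,-\Lambda\}$ with $\lambda\le\Lambda$, but your $\Lambda+\lambda$ is a valid upper bound), and the far region $\{|x-y|>1\}$, handled via the oscillation hypothesis together with the integrability of the kernel away from the diagonal. The resulting linear-in-$l$ bound and the pairing of hypotheses with regimes match the paper's argument exactly.
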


\begin{proof}
Let us set $A:=\{(x,y)\in[0,l]^2:|x-y|\leq 1\}$ and $B:=A^c \cap [0,l]^2$. Thus
\begin{align}\label{integralespezzato}
 E^l(u)=\iint_{A}\frac{|u(x)-u(y)|^2}{|x-y|^2} \, dx\,dy +\iint_{B}\frac{|u(x)-u(y)|^2}{|x-y|^2} \, dx\,dy
\end{align}
and since $u$ is $\Lambda$-Lipschitz we get
\begin{align}
\iint_{A}\frac{|u(x)-u(y)|^2}{|x-y|^2}\, dx\,dy \leq \Lambda^2|A|\leq 2 \,  \Lambda^2l.
\end{align}
Let us estimate the second integral in \eqref{integralespezzato}; using \eqref{estimateLinfty}
we have
\[
\begin{aligned}
&\iint_{B}\frac{|u(x)-u(y)|^2}{|x-y|^2} \, dx\,dy\leq \iint_{B}\frac{M^2}{|x-y|^2} \, dx\,dy = 
2 M^2 \int_1^l\int_0^{x-1} \frac{1}{|x-y|^2}\, dy\,dx \\
&= 2 M^2 (l-1 - \log l)\leq 2 M^2  l
\end{aligned}
\]
for each $l>1$. 
The thesis is achieved with $C:= 2 M^2 + 2 \Lambda^2$.
\end{proof}

\begin{remark}[Energy for evenly spaced dislocations] \label{rem periodic}
We want to show that the energy $E^l$ for a sequence $u_l \in \mathcal{U}_l$ inducing a periodic network of dislocations grows at most linearly in $l$. Set $\gamma:=\frac{\lambda + \Lambda}{\lambda} \, \delta$, $N_l:=\floor{\frac{l}{\gamma}}$ and $r(\gamma,l):=l - N_l \gamma$ so that $0<r(\gamma,l)< \gamma$. The integer $N_l$ represents the number of dislocations which will be present in $(0,l)$. Define intervals
\[ 
I_i := ( (i-1)  \gamma , \, i \gamma - \delta ) \,, \quad J_i := (i \gamma - \delta, \, i \gamma) \,, \quad R:= (N_l \gamma, \, l)
\]
for $i=1,\dots, N_l$, hence obtaining a partition of $(0,l)$. Define $u_l$ as the map such that $u_l(0)=u_l(N_l \gamma) = 0$, $u_l'= \lambda$ on $I_i$ and $R$, $u_l'= -\Lambda$ on $J_i$. 
This is possible since, thanks to the choice of $\gamma$, one can check that $\int_0^{N_l \gamma} u'_l (x) \, dx = 0$. 
In this way $u_l \in \mathcal{U}_l$ and the dislocations are evenly spaced. 

Notice that the maximum oscillation of $u_l$ in $(0,N_l \gamma)$ is exactly $\Lambda\delta$; namely
\begin{align} \label{osc per}
\max\{|u(x)-u(y)|:x,y\in (0,N_l \gamma)\}=\Lambda\delta.
\end{align}
In the interval $R$ the oscillation is given by $\lambda |R|$, and  
\[
\lambda |R| = \lambda \, r(\gamma,l) < \lambda \, \gamma  =  \Lambda \delta + \lambda \delta < 2 \Lambda \delta \,,
\]
thanks to the assumption $\lambda \leq \Lambda$. From \eqref{osc per} we then deduce 
\[
\max\{|u(x)-u(y)|:x,y\in (0,l)\}< 2 \Lambda\delta \,.
\]
Therefore from Lemma \ref{lemma3.2} we conclude that $E^l(u_l)$ scales like $l$. 
\end{remark}

In view of the above remark, we will rescale $E^l$, dividing it by $l$. Exploiting the change of variables $x'=lx$, $y'=ly$ the energy reads
\begin{align}
E^l(u)=\int_0^1 \int_{0}^1 \frac{|u(l x)-u(l y)|^2}{|x-y|^2} \, dx \, dy\,.
\end{align}
We introduce the class of admissible rescaled displacements $\mathcal W_l \subset W^{1,\infty}(0,1)$,
\begin{align}\label{wl}
\mathcal W_l:= \left\{ w_u(z):=\frac{u(lz)}{\sqrt l}, \, u\in \mathcal U_l\right\}.
\end{align}
Setting 
\begin{equation}\label{defl}
F^l(w):=  \int_0^1 \int_{0}^1 \frac{|w(x)-w(y)|^2}{|x-y|^2} \, dx \, dy \qquad \text{ for all } w\in \mathcal W_l\, ,
\end{equation}
the energy can be written as
\[
E^l(u)= l F^l(w_u) \, .
\]

Notice that, given $w\in \mathcal W_l$, there exists $X = X_w\in \AD_l$ and $u_X\in \mathcal U_l$ satisfying \eqref{uxp} such that 
\begin{equation}\label{diw}
w = w_{u_X}.
\end{equation}

\subsection{Asymptotic behavior of the energy functionals}

For each $l>0$ define
\begin{equation} \label{def:cl}
c_l := \min_{u \in \mathcal{U}_l} \frac{1}{l} \int_0^l \int_0^l \frac{|u(x)-u(y)|^2}{|x-y|^2} \, dxdy \, .
\end{equation}

\begin{theorem} \label{thm:cl}
There exists $0<c_\infty< +\infty$ such that 
\begin{equation}\label{thm:cl_c}
\lim_{l \to +\infty } c_l = c_\infty \,.
\end{equation}
	
\end{theorem}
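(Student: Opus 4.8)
The natural strategy is to show that the sequence $(c_l)_{l>0}$ is, up to lower-order errors, \emph{subadditive}, and that it is bounded away from $0$ and $+\infty$; then a Fekete-type argument yields convergence. The upper bound $c_l \le C$ is immediate from Remark \ref{rem periodic}: the evenly spaced competitor $u_l \in \mathcal{U}_l$ has $E^l(u_l) \le Cl$, hence $c_l \le C$. For the lower bound $c_\infty > 0$ one must rule out that minimizers achieve vanishing rescaled energy; the point is that a nontrivial amount of dislocation is \emph{forced} by the admissibility class. Indeed, the average of $u_X'$ on $(0,l)$ is $\lambda - (\Lambda+\lambda)\,\delta N/l$ where $N = \#X$, so either $N \gtrsim l$ (many dislocation cores, each of length $\delta$, each contributing a definite amount of $\dot H^{1/2}$ energy that does not cancel with its neighbours because cores are separated), or the mean slope of $u_X$ stays near $\lambda \neq 0$ on a fixed fraction of $(0,l)$, which forces $E^l(u_X) \gtrsim l$ by a Poincar\'e/Fourier estimate on the $\dot H^{1/2}$ seminorm of a function with nonzero mean derivative. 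Quantifying this dichotomy to get a uniform constant $c_0 > 0$ with $c_l \ge c_0$ is the first real task.

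The heart of the argument is the approximate subadditivity $c_{l_1 + l_2} \le \frac{l_1}{l_1+l_2} c_{l_1} + \frac{l_2}{l_1+l_2} c_{l_2} + \frac{o(l_1+l_2)}{l_1+l_2}$, or more robustly a near-monotonicity/almost-subadditivity statement of the form: for $l' > l$, $c_{l'} \le c_l + \varphi(l)$ with $\varphi(l) \to 0$. To prove this, take a near-optimal $u \in \mathcal{U}_l$ for $c_l$ and build a competitor on $(0,l')$ by juxtaposing $\lfloor l'/l \rfloor$ translated copies of $u$ (placing them on consecutive intervals of length $l$, with a short buffer region where $u' \equiv \lambda$ to respect the disjointness constraint on the cores and to reconnect the pieces continuously), and filling the remaining length-$r < l$ gap with the evenly spaced construction. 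The energy of this competitor on $(0,l')$ is the sum of the "diagonal block" contributions $\sum_k E^l(\text{copy})$ — each $\le l\,c_l + l\cdot(\text{near-optimality error})$ — plus the "interaction" cross terms between distinct blocks. One then needs the $\dot H^{1/2}$ kernel $|x-y|^{-2}$ to be integrable enough off-diagonal that these cross terms, together with the buffer and remainder contributions, total $o(l')$; this uses that each block has bounded oscillation (by Lemma \ref{lemma3.2}-type control, which one must verify holds for the near-minimizer — minimizers of $c_l$ do have oscillation $O(1)$, since otherwise the energy would be too large), so the interaction between blocks at distance $d$ apart is $O(l^2/d^2)$ summed over pairs, giving $O(l'\log l')$ or even $O(l')$ with the right bookkeeping. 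Dividing by $l'$ and using $l'/l \to \infty$ gives $\limsup_{l'} c_{l'} \le c_l + o(1)$ for every fixed $l$, hence $\limsup c_{l'} \le \liminf c_l$, i.e. the limit exists.

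The main obstacle I anticipate is the control of the cross (interaction) terms and of the buffer/boundary regions in the juxtaposition construction: one must simultaneously (i) ensure the glued function is genuinely admissible — continuous, piecewise affine with slopes in $\{\lambda, -\Lambda\}$, cores of exact length $\delta$ and mutually disjoint, and with the correct average so that the construction closes up (this is exactly where the buffer of $\lambda$-slope material, tuned as in Remark \ref{rem periodic}, is needed) — and (ii) show the extra energy from gluing is truly sublinear in $l'$, which is delicate because the $\dot H^{1/2}$ seminorm is nonlocal and the "honest" Lipschitz bound $E^l \le Cl$ is itself only linear, so naive triangle-inequality splitting of $\|u_1 + u_2\|^2$ across blocks loses a cross term $2\langle u_1, u_2\rangle_{\dot H^{1/2}}$ that must be estimated by hand using decay of the kernel and the uniform oscillation bound on minimizers. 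A secondary subtlety is justifying that the minimum in \eqref{def:cl} is actually attained (so "near-optimal" can be replaced by "optimal"), which requires a compactness argument in $\mathcal{U}_l$: the number of dislocations of a minimizer is a priori bounded (by the upper energy bound and the fact that each core costs energy), so one reduces to a finite-dimensional minimization over dislocation positions $x_1 < \dots < x_N$, on which $E^l$ is continuous and the constraint set is compact after passing to the closure — merging or losing dislocations at the boundary only decreases or does not increase the energy.
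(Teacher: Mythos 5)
Your overall strategy runs in the opposite direction from the one that actually closes the argument cheaply, and the direction you chose has a genuine gap. You propose \emph{subadditivity}: glue $\lfloor l'/l\rfloor$ translated copies of a near-minimizer for $c_l$ into a competitor on $(0,l')$ and show the cross terms are $o(l')$. Since the off-diagonal contributions $\int_{I_i}\int_{I_j}|u(x)-u(y)|^2|x-y|^{-2}\,dx\,dy$ are nonnegative, they must be shown small, and your estimate for them rests entirely on the claim that minimizers of $c_l$ have oscillation $O(1)$, ``since otherwise the energy would be too large.'' That claim is not justified and is not available a priori: a linear drift $u(x)=\e x$ on $(0,l)$ costs only $\e^2 l^2$ in the $\dot H^{1/2}$ seminorm squared, so an energy bound $E^l(u)\le Cl$ is compatible with oscillation of order $\sqrt{l}$ (the seminorm controls BMO, not $L^\infty$). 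With oscillation $M_l\sim\sqrt l$ per block, your cross-term sum over $K=l'/l$ blocks is of order $K M_l^2\sim l'$, i.e.\ the same order as the main term, not $o(l')$. (The fact that minimizers have no macroscopic drift is Proposition \ref{rmk:cl}, which is proved \emph{using} Theorem \ref{thm:cl}, so you cannot invoke it here.) The gluing itself has a second unresolved issue: if one copy does not close up ($\int_0^l u'\neq 0$), the drift accumulates over the $K$ copies, and a buffer of slope $\lambda>0$ can only compensate a drift of one sign. Finally, your positivity argument (the dichotomy ``many cores or nonzero mean slope'') is only announced, not carried out.

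The paper avoids all of this by arguing in the \emph{superadditive} direction, where no competitor needs to be built. Take the minimizer $u_l$ on the large interval, partition $(0,l)$ into $N=\lfloor l/h\rfloor$ subintervals of length $h$, and observe that the restriction of $u_l$ to each subinterval is automatically admissible in $\mathcal U_h$ (restriction trivially preserves the constraints in \eqref{Dis}, whereas gluing does not). Discarding the nonnegative off-diagonal contributions gives $c_h\le \frac{l}{l-r(h,l)}\,c_l$ for all $l>h$; applied along sequences realizing the $\limsup$ and the $\liminf$ with diverging ratio, this yields $\limsup c\le\liminf c$ with no cross-term estimate and no oscillation bound whatsoever. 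The same inequality with $h=1$ transfers the elementary fact $c_1>0$ (a compact, essentially finite-dimensional minimization on which the energy is continuous and strictly positive) to $c_\infty\ge c_1>0$, replacing your dichotomy. If you want to salvage your route, the missing ingredient is a proof that near-minimizers can be modified to have uniformly bounded oscillation without raising the energy by more than $o(l)$ --- but the restriction argument makes this unnecessary.
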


\begin{proof}
First notice that there exists $C>0$ such that
\[
\sup_{l>1} c_l \leq C \,.
\] 
This follows by choosing the maps $u_l$ defined in 
Remark \ref{rem periodic} as competitors for $c_l$. 

Now, for each $l$, let $u_l \in \mathcal{U}_l$ be a minimizer for $c_l$ defined in \eqref{def:cl} (whose existence follows by the standard direct method). Fix $h>0$. For each $l > h$ we define intervals $I_i := (h (i-1), h i)$ for $i=1,\dots,N$, where $N:=\floor{l/h}$. Define the remainder $r(h,l):=l- h N$ and notice that $0 \leq r(h,l) < h$.  We can choose an interval $I_j$ such that
	\[
	\int_{I_j} \int_{I_j} \frac{| u_l(x)  - u_l(y)  |^2}{|x-y|^2} \, dx \, dy \leq 
	\frac{1}{N} \sum_{i=1}^N \int_{I_i} \int_{I_i} \frac{| u_l(x)  - u_l(y)  |^2}{|x-y|^2} \, dx \, dy \,.
		\] 
		Since $u_l$ is a competitor for $c_h$ on $I_j$, we can estimate
		\begin{equation} \label{thm:cl:1}
		\begin{aligned}
		c_h & \leq \frac{1}{h}  \int_{I_j} \int_{I_j} \frac{| u_l(x)  - u_l(y)  |^2}{|x-y|^2} \, dx \, dy \leq  \frac{1}{h N} \sum_{i=1}^N \int_{I_i} \int_{I_i} \frac{| u_l(x)  - u_l(y)  |^2}{|x-y|^2} \, dx \, dy \\ 
		& \leq \frac{1}{h N} \int_0^l \int_0^l  \frac{| u_l(x)  - u_l(y)  |^2}{|x-y|^2} \, dx \, dy = \frac{l }{h N} \, c_l= \frac{l }{l - r(h,l)} \, c_l
		\end{aligned}
		\end{equation}
		Set $\underline{c} := \liminf_{l \to +\infty} c_l$ and $\overline{c} := \limsup_{l \to +\infty} c_l$,  and recall that  $\overline{c} < + \infty$. 
		
		Let $ \{l_n\}, \, \{L_n\}$  be such that $c_{l_n} \geq \overline{c}- \frac 1n$,  $c_{L_n} \leq \underline{c}+ \frac 1n$ for every $n\in\N$ 
		and $\frac{L_n}{l_n} \to +\infty$ as $n\to +\infty$ . By \eqref{thm:cl:1} with $h$ replaced by $l_n$ and $l$ replaced by $L_n$, we have
		\[
		\overline{c}- \frac 1n  \leq c_{l_n} \leq \frac{L_n}{L_n - r(l_n,L_n)} \, c_{L_n} \leq \frac{L_n}{L_n - r(l_n,L_n)}  \, (\underline{c} + \frac 1n) \,.  
		\]   
		Since $r(l_n,L_n)$ is bounded by $l_n$ and recalling that $\frac{L_n}{l_n} \to +\infty$, by taking the limit as $n \to +  \infty$ in the above inequalities we obtain
		$\overline{c}  \leq \underline{c}$, and we clearly deduce that equality holds and denote by $c_\infty:=\overline{c}=\underline{c}$ such a quantity. 
		
Finally, we show that $c_\infty$ is positive: By the condition on the derivatives of maps belonging to $\mathcal{U}_{1}$, it is immediate to see that $c_1>0$. By setting   $h = 1$ in  \eqref{thm:cl:1} and letting $l\to +\infty$ we then infer that $c_\infty>0$.  
\end{proof}

We can now define the candidate $\Gamma$-limit for $F^l$ as 
\begin{equation} \label{def:Flim} 
 F^\infty(w):=\begin{cases} \displaystyle
               \int_0^1 \int_{0}^1 \frac{|w(x)-w(y)|^2}{|x-y|^2} + c_\infty &\text{if } w\in H^{\frac 12}(0,1), \\
               +\infty&\text{otherwise,}
              \end{cases}
\end{equation}
where the constant $c_\infty$ is defined in   Theorem \ref{thm:cl}.

As a consequence of Theorem \ref{thm:cl} we also obtain the following corollary:

\begin{corollary}\label{cor1}
For any $l>0$ let $x=x_l\in [0,l]$ be an arbitrary point. Let $u_l \in \mathcal{U}_l$ be a minimizer for $c_l$. Then 
\begin{align}
\lim_{l\to+\infty}\frac{1}{l}\int_{0}^{x_l}\int_{x_l}^l\frac{| u_l(x)  - u_l(y)  |^2}{|x-y|^2} \, dx \, dy=0. 
\end{align}
\end{corollary}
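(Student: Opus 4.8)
The plan is to compare the minimizer $u_l$ on $(0,l)$ with its restrictions to the two subintervals $(0,x_l)$ and $(x_l,l)$, exploiting a superadditivity property of the energy. Cases where $x_l\in\{0,l\}$ are trivial, since then the double integral is over a null set; so assume $x_l\in(0,l)$ and write
\[
I_l:=\int_0^{x_l}\!\int_{x_l}^{l}\frac{|u_l(x)-u_l(y)|^2}{|x-y|^2}\,dx\,dy\ge 0 \, .
\]
First I would split $[0,l]^2$ into the two diagonal squares $[0,x_l]^2$, $[x_l,l]^2$ and the two off--diagonal rectangles, so that by symmetry of the integrand
\[
l\,c_l=E^l(u_l)=\int_0^{x_l}\!\int_0^{x_l}\frac{|u_l(x)-u_l(y)|^2}{|x-y|^2}\,dx\,dy+\int_{x_l}^{l}\!\int_{x_l}^{l}\frac{|u_l(x)-u_l(y)|^2}{|x-y|^2}\,dx\,dy+2\,I_l \, .
\]
The crucial point is that $\mathcal{U}_l$ is stable under restriction (up to additive constants): the restriction of $u_l=u_{X_l}$ to $(0,x_l)$ coincides, up to a constant, with the displacement associated to $X_l\cap(-\tfrac{\delta}{2},x_l+\tfrac{\delta}{2})\in\AD_{x_l}$, because on $(0,x_l)$ one has $\chi_{(0,l)}\equiv 1$ and only the dislocations of $X_l$ lying in $(-\tfrac{\delta}{2},x_l+\tfrac{\delta}{2})$ affect $(0,x_l)$. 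Since the $\dot H^{1/2}$--seminorm is insensitive to additive constants, the integral over the first diagonal square is bounded below by $x_l\,c_{x_l}$ (by definition of $c_{x_l}$); by the same argument, after translating by $x_l$, the second is bounded below by $(l-x_l)\,c_{l-x_l}$. Hence
\[
0\le\frac{1}{l}\,I_l\le\frac12\Big(c_l-\frac{x_l}{l}\,c_{x_l}-\frac{l-x_l}{l}\,c_{l-x_l}\Big) \, .
\]

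Then I would let $l\to+\infty$ using Theorem~\ref{thm:cl}. Since $c_l\to c_\infty$, it remains to prove $\frac{x_l}{l}c_{x_l}+\frac{l-x_l}{l}c_{l-x_l}\to c_\infty$, uniformly with respect to the choice of $x_l$. Fix $\e\in(0,c_\infty)$ and choose $L_\e$ with $c_t>c_\infty-\e$ for all $t\ge L_\e$. If $l>2L_\e$, at least one of $x_l$, $l-x_l$ exceeds $L_\e$; a short case distinction, in which the possibly small term is simply discarded using $c_t\ge 0$, then gives $\frac{x_l}{l}c_{x_l}+\frac{l-x_l}{l}c_{l-x_l}\ge\big(1-\tfrac{L_\e}{l}\big)(c_\infty-\e)$. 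Inserting this into the previous display and letting $l\to+\infty$ yields $\limsup_{l\to+\infty}\frac1l I_l\le\e/2$, and since $\e\in(0,c_\infty)$ is arbitrary the corollary follows.

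The argument is essentially bookkeeping, and the only step requiring genuine care --- the main obstacle --- is the restriction stability claim used in the first step: one has to check against the precise definitions in \eqref{Dis} and \eqref{uxp} that the surviving dislocation points still lie in the enlarged interval $(-\tfrac{\delta}{2},x_l+\tfrac{\delta}{2})$, that their cores stay mutually disjoint, and that the cut-off $\chi_{(0,l)}$ plays no role on $(0,x_l)$. Granting this, the superadditive lower bound for the diagonal blocks --- and hence the whole statement --- is immediate.
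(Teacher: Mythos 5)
Your proof is correct and follows essentially the same route as the paper: decompose $[0,l]^2$ into the two diagonal squares plus the off-diagonal rectangles, bound the diagonal blocks from below by $x_l\,c_{x_l}$ and $(l-x_l)\,c_{l-x_l}$ via the restriction/translation stability of $\mathcal{U}_l$, and conclude from $c_l\to c_\infty$ (Theorem \ref{thm:cl}). The only cosmetic difference is that the paper packages the final limit through the remainder $R(s)=c_s-c_\infty$ (bounded and vanishing at infinity), whereas you run an explicit $\e$--$L_\e$ case distinction; both are equivalent.
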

\begin{proof}
By Theorem \ref{thm:cl} 
the function $s\mapsto R(s):=c_s-c_\infty$ is bounded and such that $R(s)\rightarrow0$ as $s\rightarrow+\infty$.
Therefore, for every sequence $\{x_l\}_l$ we easily infer
$$\frac{x_l}{l}R(x_l)\to 0,\;\;\;\;\;\;\;\frac{l-x_l}{l}R(l-x_l)\to 0,\;\;\;\text{ as } \,\, l\to+\infty.$$
Having said that, we write
\begin{align}
 c_l&=\frac{1}{l}\int_{0}^{x_l}\int_{0}^{x_l}\frac{| u_l(x)  - u_l(y)  |^2}{|x-y|^2} \, dx \, dy+\frac{1}{l}\int_{x_l}^{l}\int_{x_l}^{l}\frac{| u_l(x)  - u_l(y)  |^2}{|x-y|^2} \, dx \, dy\nonumber\\
 &\;\;\;+ \frac{2}{l}\int_{0}^{x_l}\int_{x_l}^l\frac{| u_l(x)  - u_l(y)  |^2}{|x-y|^2} \, dx \, dy\nonumber\\
 &\geq\frac{x_l}{l}c_{x_l}+\frac{l-x_l}{l}c_{l-x_l}+\frac{2}{l}\int_{0}^{x_l}\int_{x_l}^l\frac{| u_l(x)  - u_l(y)  |^2}{|x-y|^2} \, dx \, dy\nonumber\\
 &= c_\infty+\frac{x_l}{l}R(x_l)+\frac{l-x_l}{l}R(l-x_l)+\frac{2}{l}\int_{0}^{x_l}\int_{x_l}^l\frac{| u_l(x)  - u_l(y)  |^2}{|x-y|^2} \, dx \, dy,
\end{align}
and taking the limsup as $l\to+\infty$ entails
$$c_\infty \geq c_\infty+\limsup_{l\to+\infty}\frac{2}{l}\int_{0}^{x_l}\int_{x_l}^l\frac{| u_l(x)  - u_l(y)  |^2}{|x-y|^2} \, dx \, dy\geq c_\infty \, ,
$$
from which the thesis follows.
\end{proof}

\begin{proposition} \label{rmk:cl}
Let $u_l \in \mathcal{U}_l$ be a minimizer of problem \eqref{def:cl} for each $l>0$, and let $w_l:= w_{u_l}$ be defined as in \eqref{wl} and $a_l:= \int_0^1 w_l \, dx$. Then  
\[
(w_l - a_l) \weak 0 \,\, \text{ weakly in } \,\, H^{\frac 12}(0,1) \,\, \text{ as } \,\, l \to +\infty \,.
\]
\end{proposition}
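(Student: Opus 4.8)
The plan is to show that $w_l - a_l$ is bounded in $H^{\frac12}(0,1)$ and that every weak-limit point is zero. The boundedness of the seminorm part is immediate: $[w_l]_{\dot H^{1/2}}^2 = F^l(w_l) = c_l$, and by Theorem \ref{thm:cl} the quantities $c_l$ are bounded (indeed convergent). Since $\int_0^1 (w_l - a_l)\,dx = 0$, a Poincaré–Wirtinger inequality for the fractional seminorm on $(0,1)$ gives a uniform $L^2$, hence $H^{\frac12}$, bound on $w_l - a_l$. Therefore, up to subsequences, $w_l - a_l \weak w_*$ weakly in $H^{\frac12}(0,1)$ for some $w_*$ with $\int_0^1 w_* \, dx = 0$.

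Next I would identify $w_*$. By weak lower semicontinuity of the seminorm, $[w_*]_{\dot H^{1/2}}^2 \le \liminf_l [w_l - a_l]_{\dot H^{1/2}}^2 = \liminf_l c_l = c_\infty$. This alone does not force $w_* = 0$, so the key point is to exploit the minimality of $u_l$ (equivalently of $w_l$) more sharply. The idea is that if $w_*$ were nonzero, then the ``macroscopic'' oscillation of $w_l$ would carry a strictly positive amount of energy that, by Corollary \ref{cor1}, cannot be compensated by splitting the interval, contradicting the fact that $c_l \to c_\infty$ is already the optimal constant. Concretely, I would split $(0,1)$ into two halves $(0,\tfrac12)$ and $(\tfrac12,1)$: writing $x_l = l/2$ and rescaling Corollary \ref{cor1}, the cross term
\[
\frac{1}{l}\int_0^{l/2}\int_{l/2}^l \frac{|u_l(x)-u_l(y)|^2}{|x-y|^2}\,dx\,dy = \int_0^{1/2}\int_{1/2}^1 \frac{|w_l(x)-w_l(y)|^2}{|x-y|^2}\,dx\,dy \to 0 .
\]
By weak lower semicontinuity applied on the (open) square $(0,\tfrac12)\times(\tfrac12,1)$, the limit $w_*$ must satisfy
\[
\int_0^{1/2}\int_{1/2}^1 \frac{|w_*(x)-w_*(y)|^2}{|x-y|^2}\,dx\,dy = 0,
\]
which forces $w_*$ to be (a.e.) equal to the same constant on $(0,\tfrac12)$ and on $(\tfrac12,1)$; combined with the kernel being strictly positive this actually forces $w_*$ to be a single constant on all of $(0,1)$, and the zero-average condition then gives $w_* = 0$. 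The same argument with an arbitrary split point $x_l = \theta l$, $\theta \in (0,1)$, would make this rigorous: for every $\theta$ the cross energy over $(0,\theta)\times(\theta,1)$ vanishes in the limit, so $w_*$ is constant on $(0,\theta)$ and on $(\theta,1)$ for every $\theta$, hence constant on $(0,1)$.

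The main obstacle I expect is making the passage to the limit in the cross term fully rigorous: weak convergence in $H^{\frac12}(0,1)$ gives weak convergence of the difference quotients in $L^2$ of the relevant measure space, so the double integral over the off-diagonal square $(0,\theta)\times(\theta,1)$ is weakly lower semicontinuous (the kernel $|x-y|^{-2}$ is bounded and bounded away from zero there), and Corollary \ref{cor1} forces this lower semicontinuous functional to vanish at $w_*$; one then has to argue that vanishing of $\int\!\!\int_{(0,\theta)\times(\theta,1)} |w_*(x)-w_*(y)|^2|x-y|^{-2}$ for all $\theta$ implies $w_*$ is constant, which is a short measure-theoretic step. A minor point is that $a_l$ itself need not converge, but since we only claim $w_l - a_l \weak 0$ this is irrelevant, and since the limit of every subsequence is the same value $0$, the whole sequence converges.
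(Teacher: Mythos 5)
Your proof is correct and follows essentially the same route as the paper: uniform seminorm bound plus Poincar\'e--Wirtinger for compactness, then Corollary \ref{cor1} with $x_l=l/2$ and lower semicontinuity to kill the cross term and force the limit to be constant, hence zero by the zero-average normalization. The only slip is your claim that the kernel $|x-y|^{-2}$ is bounded on $(0,\tfrac12)\times(\tfrac12,1)$ (it blows up near the corner $x=y=\tfrac12$), but this is harmless since lower semicontinuity follows anyway from Fatou's lemma along an a.e.-convergent subsequence; also, as you note, the single split at $\tfrac12$ already suffices, so the argument with arbitrary $\theta$ is not needed.
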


\begin{proof}
Set $\tilde{w}_l:=w_l - a_l$. 
By Theorem \ref{thm:cl} one can easily check that $\tilde{w}_l$ has bounded $\dot H^{\frac 12} (0,1)$-seminorm;
therefore, by Poincar\'e-Wirtinger inequality, up to a subsequence, $\tilde w_l \weak \tilde w$ in $H^{\frac 12}(0,1)$.  
Moreover, by the change of variables $x'=lx$, $y'=ly$ and by Corollary \ref{cor1} applied to $x_l=l/2$ we infer
\[
\int_0^{\frac12} \int_{\frac12}^{1} \frac{|\tilde{w}_l(x)-\tilde{w}_l(y)|^2}{ |x-y|^2 } \, dx \, dy  = 
 \frac{1}{l} \int_0^{\frac{l}{2}} \int_{\frac{l}{2}}^{l} \frac{|u_l(x)-u_l(y)|^2}{ |x-y|^2 } \, dx \, dy \to 0,
\]	
as $l \to +\infty$. By lower semi-continuity we deduce that
$$
\int_0^{\frac12} \int_{\frac12}^{1} \frac{|\tilde{w}(x)-\tilde{w}(y)|^2}{ |x-y|^2 } \, dx \, dy  = 0,
$$
which clearly implies that $\tilde w$ is constant, and in fact, since it has zero average, $\tilde w \equiv 0$. We conclude that the whole sequence $w_l - a_l $
weakly converges to $0$ in $H^{\frac 12}(0,1)$.
\end{proof}

Finally, the fact that minimizers weakly converge to zero leads to the fact that optimal dislocation configurations tend to be uniformly distributed in the limit
as $l\to +\infty$.

\begin{theorem}\label{2.6}
Let $w_l$ be a family of minimizers of $F^l$, and let 
$X_l=\{x_1,\ldots,\ldots x_{N_l}\}$  be the corresponding family of configurations of dislocations defined as in \eqref{diw}. 
Then, setting $\mu_l:= \frac{1}{l}\sum_{i=1}^{N_l} \delta_{\frac{x_i}{l}}$ we have that $\mu_l\weakstar \frac{\Lambda}{\delta(\Lambda + \lambda)}$ as $l\to +\infty$. 
\end{theorem}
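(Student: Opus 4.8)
The plan is to read off the dislocation density from the \emph{derivative} of the minimizer rather than from $u_l$ itself. Writing $g_l(z):=u_l'(lz)$, the defining relation \eqref{uxp} gives, on $(0,1)$,
\[
g_l(z)=\lambda-(\lambda+\Lambda)\sum_{i=1}^{N_l}\rchi_{C_i^l}(z),\qquad C_i^l:=\Big(\tfrac{x_i}{l}-\tfrac{\delta}{2l},\,\tfrac{x_i}{l}+\tfrac{\delta}{2l}\Big),
\]
so that $g_l$ simultaneously carries the constant background $\lambda$ and the rescaled cores, which are intervals of width $\delta/l$ centred at the atoms $x_i/l$ of $\mu_l$. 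Since $|g_l|\le\Lambda$, the family $\{g_l\}$ is bounded in $L^2(0,1)$, and $\{\mu_l\}$ is bounded in $\M([0,1])$ because the cores are disjoint (whence $N_l\delta\le l+\delta$); both are therefore weak-$*$ precompact. The decisive input is Proposition \ref{rmk:cl}: via the compact embedding $H^{1/2}(0,1)\hookrightarrow\hookrightarrow L^2(0,1)$ it upgrades to $w_l-a_l\to0$ strongly in $L^2(0,1)$.

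First I would prove $g_l\weak 0$ in $L^2(0,1)$. Recalling from \eqref{wl} that $w_l(z)=u_l(lz)/\sqrt l$, one has $w_l'=\sqrt l\,g_l$, so that for every $\psi\in C_c^\infty(0,1)$, integrating by parts and using $\int_0^1\psi'=0$,
\[
\sqrt l\int_0^1 g_l\,\psi\,dz=-\int_0^1 w_l\,\psi'\,dz=-\int_0^1 (w_l-a_l)\,\psi'\,dz\longrightarrow 0.
\]
Hence $\int_0^1 g_l\psi\to0$ for all $\psi\in C_c^\infty(0,1)$; since $\|g_l\|_{L^2}\le\Lambda$ and $C_c^\infty(0,1)$ is dense in $L^2$, a routine $\varepsilon$-approximation upgrades this to $g_l\weak0$ in $L^2(0,1)$.

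Next I would identify the limit of $\mu_l$. Testing $g_l\weak0$ against $\psi\in C_c^\infty(0,1)$ and inserting the explicit form of $g_l$ gives
\[
\lambda\int_0^1\psi\,dz-(\lambda+\Lambda)\sum_{i=1}^{N_l}\int_{C_i^l}\psi\,dz\longrightarrow0.
\]
As each $C_i^l$ has width $\delta/l$, the midpoint bound $\big|\int_{C_i^l}\psi-\tfrac{\delta}{l}\psi(\tfrac{x_i}{l})\big|\le\tfrac{\delta^2}{2l^2}\|\psi'\|_\infty$, summed over the $N_l\le l/\delta+1$ disjoint cores, yields $\sum_i\int_{C_i^l}\psi=\delta\int_0^1\psi\,d\mu_l+O(\delta/l)$. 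Passing to the limit gives $\int_0^1\psi\,d\mu_l\to\frac{\lambda}{\delta(\lambda+\Lambda)}\int_0^1\psi\,dz$ for every $\psi\in C_c(0,1)$, so the interior part of any weak-$*$ limit $\nu$ equals $\frac{\lambda}{\delta(\lambda+\Lambda)}\mathcal L^1$. To rule out mass escaping to the endpoints I would test $g_l\weak0$ against $\psi\equiv1$: since $\int_0^1 g_l\,dz=\lambda-(\lambda+\Lambda)N_l\delta/l$, this forces $N_l/l\to\frac{\lambda}{\delta(\lambda+\Lambda)}$, whence $\nu([0,1])$ already equals the mass of $\frac{\lambda}{\delta(\lambda+\Lambda)}\mathcal L^1$; so $\nu$ carries no atoms at $0,1$ and the whole sequence converges.

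The main obstacle is the two-scale bookkeeping in the middle step: one must extract genuine weak-$L^2$ convergence of $g_l$ (the factor $\sqrt l$ is precisely what lets the background and the cores balance) and then convert the $L^\infty$ core-indicators into the point masses of $\mu_l$ uniformly in the number of cores, the endpoint control via the total mass being what promotes interior convergence to convergence in $\M([0,1])$. Finally, I must flag an arithmetic point: this scheme produces the density $\frac{\lambda}{\delta(\lambda+\Lambda)}$, which is exactly $1/\gamma$ for the optimal period $\gamma=\frac{\lambda+\Lambda}{\lambda}\delta$ of Remark \ref{rem periodic}; consequently the factor $\Lambda$ in the numerator of the stated limit should read $\lambda$, the displayed $\frac{\Lambda}{\delta(\Lambda+\lambda)}$ being a typographical slip (indeed $\Lambda$ in the numerator would force a nonvanishing macroscopic slope, contradicting Proposition \ref{rmk:cl}).
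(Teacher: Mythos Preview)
Your argument is correct, and so is your observation that the constant in the statement should read $\frac{\lambda}{\delta(\Lambda+\lambda)}$; the paper's own proof in fact derives $\frac{M_l}{l}\to\frac{\lambda}{\delta(\Lambda+\lambda)}(q-p)$, confirming the typo you flag.

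The underlying idea is the same as the paper's: both proofs use Proposition~\ref{rmk:cl} to show that $w_l-a_l\to0$, and both then read off the dislocation density from $\int w_l'$. The packaging differs. The paper works pointwise: given $0<p<q<1$, the $L^1$-convergence of $w_l-a_l$ furnishes points $p_l\to p$, $q_l\to q$ with $w_l(q_l)-w_l(p_l)\to0$, and one computes $\int_{p_l}^{q_l}w_l'$ directly to count dislocations in $(p_l,q_l)$. You instead prove that $g_l=u_l'(l\,\cdot)$ converges weakly to $0$ in $L^2(0,1)$ by integrating by parts against $C_c^\infty$ test functions, and then translate this into convergence of $\mu_l$ via the midpoint estimate on the cores. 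Your route has the minor advantage that it handles the endpoints cleanly: testing $g_l\rightharpoonup0$ against $\psi\equiv1$ pins down the total mass $\mu_l([0,1])\to\frac{\lambda}{\delta(\Lambda+\lambda)}$, ruling out escape of mass to $\{0,1\}$, a point the paper leaves implicit. One tiny imprecision: $\int_0^1 g_l$ equals $\lambda-(\lambda+\Lambda)N_l\delta/l$ only up to an $O(1/l)$ error from the (at most two) cores not fully contained in $(0,1)$, but this is harmless.
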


\begin{proof}
Let us fix $0<p<q<1$; by Proposition \ref{rmk:cl} we have that, up to an additive constant, $w_l\to 0$ in $L^1$. Therefore, there exist $p_l \to p$, $q_l\to q$ such that
$r_l:=w_l (q_l) - w_l(p_l) \to 0$ as $l\to +\infty$. Then, setting $M_l:= \sharp \frac{X_l}{l}\cap (p_l,q_l)$ we have
\[
r_l = w_l (q_l) - w_l(p_l)  = \int_{p_l}^{q_l} w_l' \, dx
= -\frac{M_l}{\sqrt l} (\Lambda + \lambda) \delta + \lambda \sqrt{l} (q_l - p_l) + \tilde r_l \,,
\]
where $\tilde r_l \to 0$ as $l\to +\infty$.
We deduce that
$$
\frac{M_l}{l}= \frac{\lambda}{\delta(\Lambda + \lambda)} (q-p) + \hat r_l\,,
$$
where $\hat r_l\to 0$ as $l\to +\infty$. By the arbitrariness of $p$ and $q$ in $(0,1)$, we conclude the thesis. 
\end{proof}

\section{$\Gamma$-convergence}

\begin{theorem}[$\Gamma$-convergence]\label{mainthm}
As $l \to +\infty$, the functionals $F^l$ defined in \eqref{defl} $\Gamma$-converge with respect to the weak topology of $H^{\frac 12}(0,1)$ to the functional $F^\infty$, defined in \eqref{def:Flim}.

\end{theorem}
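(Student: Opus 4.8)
The plan is to verify the two inequalities defining $\Gamma$-convergence for the weak $H^{\frac12}(0,1)$ topology: the liminf inequality $F^\infty(w)\le\liminf_{l}F^l(w_l)$ for every sequence $w_l\weak w$, and the existence, for each $w\in H^{\frac12}(0,1)$, of a recovery sequence $w_l\in\mathcal W_l$ with $w_l\weak w$ and $\limsup_l F^l(w_l)\le F^\infty(w)$ (we extend $F^l$ by $+\infty$ outside $\mathcal W_l$). A repeatedly used fact is that $H^{\frac12}(0,1)$ embeds compactly into $L^2(0,1)$, so that any sequence $w_l\weak w$ in $H^{\frac12}$ converges to $w$ strongly in $L^2$.

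\emph{Liminf inequality.} Let $w_l\weak w$ in $H^{\frac12}(0,1)$. We may assume $\liminf_l F^l(w_l)<+\infty$; passing to a subsequence along which the liminf is attained, $w_l\in\mathcal W_l$ for all $l$ and $w_l\to w$ in $L^2$. Fix $\eta\in(0,1)$ and split the integral defining $F^l(w_l)$ into the regions $\{|x-y|\ge\eta\}$ and $\{|x-y|<\eta\}$. On $\{|x-y|\ge\eta\}$ the kernel is bounded, so the corresponding quadratic form is continuous along $L^2$-convergent sequences and converges to $\iint_{|x-y|\ge\eta}\frac{|w(x)-w(y)|^2}{|x-y|^2}$. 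For $\{|x-y|<\eta\}$ I undo the rescaling \eqref{wl}, rewriting that contribution as $\frac1l\iint_{(0,l)^2,\,|s-t|<\eta l}\frac{|u_l(s)-u_l(t)|^2}{|s-t|^2}\,ds\,dt$ with $u_l$ the displacement associated to $w_l$, then partition $(0,l)$ into $\lfloor1/\eta\rfloor$ consecutive intervals of length $\eta l$ (plus a shorter remainder); two points in the same interval are at distance $<\eta l$, and the restriction of $u_l$ to each such interval (translated to the origin) lies in $\mathcal U_{\eta l}$, so this contribution is bounded below by $(1-\eta)\,c_{\eta l}$, which tends to $(1-\eta)\,c_\infty$ by Theorem \ref{thm:cl}. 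Thus $\liminf_l F^l(w_l)\ge\iint_{|x-y|\ge\eta}\frac{|w(x)-w(y)|^2}{|x-y|^2}+(1-\eta)c_\infty$, and letting $\eta\to0^+$ (monotone convergence in the first term) gives $\liminf_l F^l(w_l)\ge F^\infty(w)$.

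\emph{Recovery sequence.} Since $w\mapsto\iint_{(0,1)^2}\frac{|w(x)-w(y)|^2}{|x-y|^2}$ is strongly continuous on $H^{\frac12}(0,1)$ and Lipschitz functions are dense there, a diagonal argument (the approximating recovery sequences being uniformly bounded in $H^{\frac12}$, a set on which the weak topology is metrizable) reduces the construction to $w\in W^{1,\infty}(0,1)$. For such $w$ I fix a mesoscale $R_l\to+\infty$ with $R_l=o(l)$ (and large enough relative to $\sqrt l$ for the adjustments below), partition $(0,l)$ into cells $J_k=((k-1)R_l,kR_l)$, and on each cell insert a dislocation configuration that is $(c_\infty+o(1))$-optimal for the energy on a window of length $R_l$ and whose number of dislocations $n_k$ is chosen so that the mean slope of $u_l$ on $J_k$ equals $w'(kR_l/l)/\sqrt l$ up to the quantisation error; the per-cell additive constants and the roundings of the $n_k$ are fixed by a running compensation so that the resulting $u_l\in\mathcal U_l$ satisfies $\|u_l-V_l\|_{L^\infty(0,l)}\le C$ uniformly in $l$, where $V_l(s):=\sqrt l\,w(s/l)$. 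Then $w_l:=w_{u_l}\in\mathcal W_l$ obeys $\|w_l-w\|_{L^\infty(0,1)}=O(l^{-1/2})$, hence $w_l\weak w$ in $H^{\frac12}$ once the energy is controlled. Writing $u_l=V_l+\phi_l$ with $\|\phi_l\|_{L^\infty}\le C$ and $\phi_l$ being $2\Lambda$-Lipschitz, and expanding, $\tfrac1l\|u_l\|_{\dot H^{1/2}(0,l)}^2=\tfrac1l\|V_l\|_{\dot H^{1/2}(0,l)}^2+\tfrac2l\langle V_l,\phi_l\rangle_{\dot H^{1/2}(0,l)}+\tfrac1l\|\phi_l\|_{\dot H^{1/2}(0,l)}^2$: the first term equals $\iint_{(0,1)^2}\frac{|w(x)-w(y)|^2}{|x-y|^2}$ after rescaling; the cross term is $o(1)$, since $|V_l(x)-V_l(y)|\le\|w'\|_{L^\infty}|x-y|/\sqrt l$ for all $x,y$ forces $\tfrac1l|\langle V_l,\phi_l\rangle|\lesssim\|w'\|_{L^\infty}l^{-1/2}\log l$; and the last term equals $\tfrac1l\sum_k\|\phi_l\|_{\dot H^{1/2}(J_k)}^2+o(1)$, the cross-cell contributions being negligible by the $L^\infty$ and Lipschitz bounds on $\phi_l$, while $\|\phi_l\|_{\dot H^{1/2}(J_k)}^2\le(c_\infty+o(1))R_l$ by the near-optimality of the inserted microstructure together with $\sum_k\|V_l\|_{\dot H^{1/2}(J_k)}^2=o(l)$ (which absorbs the mismatch between $\phi_l|_{J_k}$ and $u_l|_{J_k}$). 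Summing over the $\sim l/R_l$ cells gives $\limsup_l F^l(w_l)\le\iint_{(0,1)^2}\frac{|w(x)-w(y)|^2}{|x-y|^2}+c_\infty=F^\infty(w)$.

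\emph{Main obstacle.} The liminf part is a routine localisation built on Theorem \ref{thm:cl}; the delicate point is the recovery sequence. On each mesoscale cell the dislocation count must be \emph{prescribed} (to realise the macroscopic slope of $w$) while keeping the per-cell energy at $(c_\infty+o(1))R_l$. Equivalently, writing $\tilde c_R(n)$ for the minimum of $\tfrac1R E^R(\cdot)$ over admissible configurations on $(0,R)$ with exactly $n$ dislocations, one needs $\tilde c_R(n)\to c_\infty$ as $R\to+\infty$, uniformly over all $n$ with $n/R$ converging to the limiting dislocation density of Theorem \ref{2.6}; that is, pinning the dislocation count must not raise the leading-order energy. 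I expect this to follow by combining the sub-additivity estimate \eqref{thm:cl:1} with quantitative control on minimisers of $c_R$ — a bound on the distance of the optimal count from the equilibrium count, and an estimate of order $(\text{number of inserted dislocations})\cdot\sqrt{R\log R}$ for the energy cost of inserting or removing dislocations from a near-optimal configuration — so that the residual adjustment needed in each cell is both small enough and cheap enough.
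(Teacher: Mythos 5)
Your $\Gamma$-liminf argument is correct and is essentially the paper's: you localise into blocks, recognise each diagonal block (after translation and rescaling) as a competitor for the cell problem \eqref{def:cl}, and invoke Theorem \ref{thm:cl}; the only cosmetic differences are that you keep the mesoscale $\eta$ fixed and send $\eta\to 0$ at the end, and that you treat the far-field part by strong $L^2$ continuity of the truncated quadratic form rather than by weak $L^2$ convergence of the difference quotients.

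The recovery sequence, however, contains a genuine gap, and you have put your finger on it yourself. The whole cell-by-cell construction rests on the claim that the constrained minimum $\tilde c_R(n)$, with the dislocation count $n$ pinned so as to realise the macroscopic slope $w'/\sqrt l$, still converges to $c_\infty$ uniformly over the relevant range of $n$. This is precisely the hard part of the limsup inequality, and ``I expect this to follow from\dots'' is not a proof: the insertion estimate of order $(\#\text{inserted})\cdot\sqrt{R\log R}$ that you invoke is itself unproved and does not follow from anything established in the paper. There is a second, independent problem: the reduction $\tfrac1l\|\phi_l\|^2_{\dot H^{1/2}(0,l)}=\tfrac1l\sum_k\|\phi_l\|^2_{\dot H^{1/2}(J_k)}+o(1)$ is not justified by the bounds you actually have. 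With only $\|\phi_l\|_{L^\infty}\le C$ and a Lipschitz bound, the best estimate for the cross-cell contribution is $\iint_{\{|x-y|\ge1\}}4C^2|x-y|^{-2}\,dx\,dy\sim 8C^2l$, i.e.\ of the \emph{same order} as the main term. For a genuine global minimiser this long-range interaction really is $o(l)$ --- that is the content of Corollary \ref{cor1} and Lemma \ref{bpd} --- but your $\phi_l$ is assembled from independently chosen near-minimisers with adjusted counts and compensating constants, and nothing you have written forces distinct cells to interact weakly.

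The paper avoids both difficulties by a different construction: it takes a single \emph{global} minimiser $u_l$ for $c_l$ (total energy exactly $c_l\,l$, so no inter-cell bookkeeping is needed) and superimposes a sparse family of $O(\sqrt l)$ extra dislocations encoding the macroscopic strain $w'$, inserted via a domain-stretching map to preserve admissibility. The interaction between the minimiser part and the inserted part is then controlled by Lemma \ref{bpd}, and the $\dot H^{1/2}$ cross term vanishes because the rescaled minimisers converge weakly to zero (Proposition \ref{rmk:cl}). To salvage your approach you would need to prove both the uniform convergence $\tilde c_R(n)\to c_\infty$ and an $o(l)$ bound on the inter-cell interactions of your assembled configuration; neither is routine.
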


\subsection{Proof of $\Gamma$-liminf inequality}
	Let $w_l \in \mathcal W_l $ be such that $w_l \weak w$ weakly in $H^{\frac12}$.
	Let $l\mapsto M_l\in(0,l) \cap \N$ be  such that $M_l \to +\infty$ as $l \to +\infty$ and
	\begin{equation} \label{gamm:1}
	 \lim_{l\to +\infty} \frac{M_l}{l} =0 \,.
	\end{equation}
	Let $x_i := \frac{i}{M_l}$ for $i=0,\dots,M_l$  and set $I_i:=(x_{i-1},x_i)$ for $i=1,\dots,M_l$. 
	In order to show the $\Gamma$-liminf inequality, we decompose the energy as
	\[
	\begin{aligned}
	\norh{w_l}^2 & = \sum_{i \neq j}^{M_l} \int_{I_i} \int_{I_j} \frac{| w_l(x) - w_l (y)  |^2}{|x-y|^2} \, dx \, dy + \sum_{i =1}^{M_l} \int_{I_i} \int_{I_i} \frac{| w_l(x) - w_l (y)  |^2}{|x-y|^2} \, dx \, dy \\
	  & =: J_l^1 + J_l^2 \,.  
	\end{aligned}
	\]
	Let us first estimate $J^2_l$. For $x \in (0,l/M_l)$ and each $i=1,\dots,M_l$, define the rescaled function
	\[
	u_l^i(x):= \sqrt{l}  w_l \left( \frac{x}{l} + x_{i-1} \right) \,.
	\]
	By its very definition $u_l^i \in \mathcal U_{\frac{l}{M_l}}$, and whence it  is a competitor for $c_{\frac{l}{M_l}}$, as defined in \eqref{def:cl}. Therefore, by introducing the new variables $x=x'/l + x_{i-1}$, $y=y'/l + x_{i-1}$ and recalling that $x_i = x_{i-1}+1/M_l$, we infer 
	\[
	\begin{aligned}
	 \int_{I_i} \int_{I_i} & \frac{ |w_l (x)  - w_l (y) |^2}{|x-y|^2} \, dx \, dy =\\  
	 & =  \int_0^{l/M_l} \int_0^{l/M_l} \frac{|w_l (\frac{x'}{l} + x_{i-1}) - w_l (\frac{y'}{l} + x_{i-1})|^2}{|x'-y'|^2} \, dx' \, dy' \\
	 & =  \frac{1}{l} \int_0^{l/M_l} \int_0^{l/M_l} \frac{|u_l^i (x') - u_l^i (y')|^2}{|x'-y'|^2} \, dx' \, dy' \geq \frac{1}{M_l} \, c_{\frac{l}{M_l}} \,. 
	\end{aligned}
	\]
	Hence
	\[
	J^2_l \geq c_{\frac{l}{M_l}} \,.
	\]
	By assumption \eqref{gamm:1} we have $l/M_l \to +\infty$ as $l \to +\infty$, therefore we can apply Theorem \ref{thm:cl} and conclude that 
	\[
	\liminf_{l \to +\infty} J^2_l \geq \lim_{l \to +\infty} c_{\frac{l}{M_l}} = c_\infty \,.
	\]
	From the above inequality we get
	\begin{equation} \label{gamm:3}
	\liminf_{l \to +\infty} \norh{w_l}^2 = \liminf_{l \to +\infty} (J^1_l + J^2_l) \geq \liminf_{l \to +\infty} J^1_l   + c_\infty \,.
	\end{equation}
	We are left to estimate $J^1_l$, and we have to prove that 
	\begin{equation} \label{gamm:5}
	\liminf_{l \to +\infty} J^1_l \geq \norh{w}^2 \,.
	\end{equation}
	Let $Q:=[0,1] \times [0,1]$, $D_l:=\cup_{i=1}^{M_l} I_i \times I_i$ and $Q_l := Q \smallsetminus D_l$.  For a.e.~$(x,y) \in Q$ define
	\[
	g (x,y) := \frac{|w (x) - w(y)|}{|x-y|}  \,, \quad g_l (x,y) := \frac{|w_l (x) - w_l(y)|}{|x-y|} \,, \quad \tilde{g}_l := \rchi_{Q_l} g_l\,.
	\]
	Now notice that, as $l \to +\infty$,
	\begin{equation} \label{gamm:6}
	g_l \weak g \quad \text{ weakly in } \quad L^2(Q) \,.
	\end{equation}
	Indeed, since we are assuming that $w_l \weak w$ weakly in $H^{\frac 12}(0,1)$, by compact Sobolev embedding, one has that $(w_l - \int_0^1 w_l \, dx )\to (w - \int_0^1 w \, dx)$ strongly in $L^2(0,1)$ (by  a Poincar\'e-Wirtinger type inequality we can control the full norm of $(w_l - \int_0^1 w_l \, dx )$ in $H^{\frac12}$). Therefore, up to subsequences, we also have $(w_l - \int_0^1 w_l \, dx )\to (w - \int_0^1 w \, dx)$ a.e.~in $(0,1)$. By definition we then have $g_l \to g$ a.e. in $Q$. Since $\nor{g_l}_{L^2(Q)}= \norh{w_l}$ is uniformly bounded, we also have (along the subsequence) $g_l \weak g$ weakly in $L^2(Q)$. Since the limit does not depend on the subsequence, we conclude \eqref{gamm:6}. 
	Observe that $\rchi_{Q_l} \to 1$ strongly in $L^p(Q)$ for every $1\leq p < \infty$, since $|D_l|=  1/M_l \to 0$ as $l \to +\infty$. Therefore from \eqref{gamm:6} we conclude that $\tilde{g}_l \weak g$ weakly in $L^2(Q)$, and so \eqref{gamm:5} follows by lower semicontinuity, upon noticing that $\nor{\tilde{g}_l}^2_{L^2(Q)}= J_l^1 $.
From \eqref{gamm:3}, \eqref{gamm:5} we conclude the $\Gamma$-liminf inequality.

\subsection{Proof of the $\Gamma$-limsup inequality}
In order to prove the $\Gamma$-limsup inequality, we need the following Lemma.
\begin{lemma}\label{bpd}
Let $u_l$ be a minimizer for  \eqref{def:cl}. Let moreover $A_l\subset [0,l]$ be an open set such that $|A_l | /l \to 0$ as $l\to +\infty$, and $A_l$ is 
union of intervals whose length is larger than some constant $C>0$ independent of $l$. Then
$$
\lim_{l\to +\infty} \frac{1}{l}  \int_{A_l}\int_0^l \frac{|u_l(x) - u_l(y)|^2}{|x-y|^2} \, dx \, dy = 0. 
$$  
\end{lemma}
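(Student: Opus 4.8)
The aim is to show that the contribution to the (rescaled) energy coming from integrating $x$ over a "small" set $A_l$ vanishes in the limit. I would exploit the same splitting philosophy already used for Lemma~\ref{lemma3.2}: split the double integral according to whether $|x-y|\le 1$ or $|x-y|>1$, and estimate the two pieces separately using, respectively, the Lipschitz bound on $u_l$ and the global oscillation control coming from Proposition~\ref{rmk:cl} (equivalently, from the uniform bound $c_l\le C$).

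\textbf{Step 1: the near-diagonal part.} On the set $\{(x,y): x\in A_l,\ |x-y|\le 1\}$ we use that $u_l$ is $\Lambda$-Lipschitz, so the integrand is bounded by $\Lambda^2$, and the measure of this set is at most $2|A_l|$. Hence this contribution is $\le 2\Lambda^2 |A_l|$, and dividing by $l$ gives $2\Lambda^2 |A_l|/l\to 0$ by hypothesis. This step is routine.

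\textbf{Step 2: the far-diagonal part.} Here I would use that the rescaled minimizers $w_l=w_{u_l}$ (or rather $w_l-a_l$) converge weakly, hence strongly in $L^2(0,1)$ and in $L^1(0,1)$, to $0$; translated back to $u_l$, this says that the oscillation of $u_l$ is controlled in an averaged ($L^2$ or $L^1$) sense, even if not pointwise. More precisely, after the change of variables $x=l\bar x$, $y=l\bar y$, the far-diagonal piece becomes $\int_{\bar A_l}\int_{\{|\bar x-\bar y|>1/l\}} \frac{|w_l(\bar x)-w_l(\bar y)|^2}{|\bar x-\bar y|^2}\,d\bar x\,d\bar y$ with $|\bar A_l|=|A_l|/l\to 0$. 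Since $|\bar x-\bar y|>1/l$ forces $|\bar x-\bar y|^{-2}\le l^2$, and actually one only needs a much weaker bound: writing the kernel as $|\bar x-\bar y|^{-2}\le l \cdot |\bar x - \bar y|^{-1}$ on that region, one gets, using $|w_l(\bar x)-w_l(\bar y)|^2\le 2|w_l(\bar x)|^2+2|w_l(\bar y)|^2$ (after subtracting the mean $a_l$), a bound of the form $C\, l\, \|w_l-a_l\|_{L^2(0,1)}^2 \cdot \sup_{\bar x}\int_{\bar A_l} |\bar x-\bar y|^{-1} d\bar y$; the logarithmic divergence of $\int |\bar x-\bar y|^{-1}$ is harmless since $|\bar A_l|\to 0$ makes that integral $\to 0$ uniformly (it is $\le |\bar A_l|(1+|\log|\bar A_l||)$ on bounded domains only if one is careful — alternatively split once more $|\bar x - \bar y|\le 1$ versus $>1$). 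Dividing the original (unrescaled) quantity by $l$ then kills the factor $l$ and leaves a vanishing quantity.

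\textbf{Main obstacle.} The delicate point is Step~2: one must reconcile the fact that we only control $u_l$ in an \emph{averaged} sense (weak $H^{1/2}$, i.e.\ $L^2$ convergence of $w_l-a_l$ to $0$) with the need to bound a double integral where $x$ is restricted to $A_l$ but $y$ ranges over all of $(0,l)$, so the kernel $|x-y|^{-2}$ is genuinely singular along the diagonal inside $A_l$. The hypothesis that $A_l$ is a union of intervals of length bounded below by a fixed $C>0$ is exactly what is needed here: it lets us control the near-diagonal contribution on $A_l\times A_l$ by noting that on each such interval $I$ of $A_l$ the quantity $\frac1{|I|}\int_I\int_I \frac{|u_l(x)-u_l(y)|^2}{|x-y|^2}$ is a competitor for $c_{|I|}$ and hence bounded by $C$, so summing over the (at most $|A_l|/C$) intervals of $A_l$ gives a bound $\le C|A_l|$ for that piece too — after dividing by $l$ this again vanishes. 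I would therefore organize Step~2 as: (i) the $A_l\times A_l$ near-diagonal part handled by the $c_{|I|}\le C$ estimate interval-by-interval; (ii) the genuinely off-diagonal part $A_l\times([0,l]\setminus A_l)$ together with $|x-y|\ge$ (fixed distance) handled by Cauchy–Schwarz against the $L^2$ smallness of $w_l-a_l$, absorbing the $\log$ from $\int|x-y|^{-2}$ over the complement of a neighborhood of the diagonal into the vanishing factor $|A_l|/l$. The bookkeeping is somewhat fiddly but each ingredient — Lipschitz bound, $c_l\le C$, strong $L^2$ convergence of $w_l-a_l$ — is already available from the preceding results.
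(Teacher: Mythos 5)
There is a genuine gap, and it sits exactly where you flagged the ``main obstacle.'' Two of your key steps do not work. First, in sub-step (i) you write that on each interval $I$ of $A_l$ the quantity $\frac{1}{|I|}\int_I\int_I \frac{|u_l(x)-u_l(y)|^2}{|x-y|^2}\,dx\,dy$ ``is a competitor for $c_{|I|}$ and hence bounded by $C$'': competitorship gives the \emph{lower} bound $\geq c_{|I|}$, not an upper bound, and there is no a priori upper bound on the energy that a global minimizer stores on a prescribed subinterval. Second, in sub-step (ii) the bookkeeping with the factor of $l$ is wrong: after the change of variables the quantity $\int_{\bar A_l}\int_0^1 |w_l(\bar x)-w_l(\bar y)|^2|\bar x-\bar y|^{-2}$ already \emph{is} the rescaled energy $\frac1l\int_{A_l}\int_0^l(\cdots)$, so there is no remaining division by $l$ to kill the factor $l$ you introduce by bounding the kernel. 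Your estimate thus reduces to needing $l\,\|w_l-a_l\|_{L^2(0,1)}^2\to 0$ (equivalently $\int_{A_l}|u_l-\bar u_l|^2\,dx = o(l)$), whereas Proposition \ref{rmk:cl} only yields $\|w_l-a_l\|_{L^2}\to 0$ with no rate, i.e.\ $\int_0^l|u_l-\bar u_l|^2\,dx=o(l^2)$. More structurally, replacing $|w_l(\bar x)-w_l(\bar y)|^2$ by $2|w_l(\bar x)-a_l|^2+2|w_l(\bar y)-a_l|^2$ discards precisely the cancellation between nearby points that makes the singular kernel integrable; averaged $L^2$ smallness cannot substitute for it. Only your Step 1 (the strip $|x-y|\le 1$, handled by the Lipschitz bound) is correct.

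The paper's proof uses a different, and in a sense opposite, mechanism, which is the missing idea here: instead of upper-bounding the contribution of $A_l$ directly, it lower-bounds the contribution of the complement. Along a subsequence realizing $E:=\limsup \frac1l\int_{A_l}\int_0^l(\cdots)$, minimality gives $\frac1l\int_{A_l^c}\int_0^l(\cdots)\to c_\infty-E$. The hypothesis that $A_l$ consists of intervals of length $\geq C$ guarantees that the connected components of $A_l^c$ of length at least $N$ cover a fraction of $[0,l]$ tending to $1$; by averaging, one such component $B$ satisfies $\frac{1}{|B|}\int_B\int_B(\cdots)\le c_\infty-E+o(1)$, and since $u_l$ restricted to $B$ is a competitor for $c_{|B|}$ with $|B|\ge N\to\infty$, Theorem \ref{thm:cl} forces $c_\infty\le c_\infty-E$, i.e.\ $E=0$. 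If you want to salvage your outline, this competitor \emph{lower} bound on large components of $A_l^c$, combined with the fact that the total energy is $(c_\infty+o(1))\,l$, is the ingredient you must insert in place of steps (i) and (ii).
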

\begin{proof}
Let $l_n\to +\infty$ be such that
$$
\lim_{n\to +\infty} \frac{1}{l_n}  \int_{A_{l_n}}\int_0^{l_n} \frac{|u_{l_n}(x) - u_{l_n}(y)|^2}{|x-y|^2} \, dx \, dy =   \limsup_{l\to +\infty} \frac{1}{l}  \int_{A_l}\int_0^l \frac{|u_l(x) - u_l(y)|^2}{|x-y|^2} \, dx \, dy =: E. 
$$
By the optimality of $u_l$ we have
\begin{equation}\label{poe}
\lim_{n\to +\infty} \frac{1}{l_n}  \int_{A^c_{l_n}}\int_0^{l_n} \frac{|u_{l_n}(x) - u_{l_n}(y)|^2}{|x-y|^2} \, dx \, dy = c_\infty - E \,,
\end{equation} 
where we denote by $A_{l_n}^c$ the complement of $A_{l_n}$ in $[0,l]$.
Fix $N\in\N$, and let $\{B^N_{i,n}\}_i$ be the connected components of $A_{l_n}^c$ whose length is at least $N$. 
By the assumption $|A_l|/l \to 0$ as $l \to +\infty$, we infer
$$
\lim_{n\to +\infty}  \frac{ \sum_i |B^N_{i,n}|}{l_n} =1\,.
$$
Hence 
by \eqref{poe}, there exists at least one element of $\{B^N_{i,n}\}_i$, which we name $(x_{n,N},y_{n,N})$,  such that 
$$
\limsup_{n \to +\infty} \frac{1}{y_{n,N} - x_{n,N}}  \int_{x_{n,N}}^{y_{n,N}} \int_{x_{n,N}}^{y_{n,N}} \frac{|u_{l_n} (x) - u_{l_n} (y)|^2}{|x-y|^2} \, dx \, dy \le c_\infty -E. 
$$
By a diagonal argument, there exists a subsequence $\tilde l_N = l_{n_N}$  and intervals $(x_N,y_{N})$ such that 
$$
\lim_{N \to + \infty} \frac{1}{y_N - x_N}  \int_{x_N}^{y_N} \int_{x_N}^{y_N} \frac{|u_{\tilde l_N} (x) - u_{\tilde l_N} (y)|^2}{|x-y|^2} \, dx \, dy \le c_\infty -E. 
$$
Setting $\tilde u_N(x):=  u_{\tilde l_N} (x - x_N)$ we have
$$
\lim_{N \to + \infty} \frac{1}{y_N - x_N}  \int_0^{y_N - {x_N}} \int_0^{y_N - {x_N}}  \frac{|\tilde u_{N} (x) - \tilde u_{N} (y)|^2}{|x-y|^2} \, dx \, dy \le c_\infty -E. 
$$
Since $\tilde u_N \in\mathcal U_{y_N - x_N}$ and recalling that $y_N - x_N\to +\infty$, by Theorem \ref{thm:cl} we conclude that $E=0$, ending the proof.  
\end{proof}

Now, let $w \in H^{\frac 12}(0,1)$. We want to construct a recovery sequence $g_l \in \mathcal{W}_l$ such that 
\begin{gather}
g_l \weak w   \quad \text{weakly in} \quad H^{\frac 12}(0,1) 
\label{sup:1} \quad \text{as} \quad l \to +\infty \,, \\   
\limsup_{l \to +\infty}F^l(g_l) = F^{\infty} (w)\,, \label{sup:2}
\end{gather}
where $F^l$ is defined in \eqref{defl} and $F^{\infty}$ in \eqref{def:Flim}. 

By standard density arguments in $\Gamma$-convergence we may assume that $w$ is piece-wise affine. Specifically, without loss of generality we  assume that $w\in C^0(0,1)$ is of the form $w=\sum_{i=1}^m (c_i+ \alpha_i x) \chi_{I_i}$ where $c_i \in\R, \, \alpha_i \in \R \setminus \{0\}$, and $\{I_i\}$ is a partition of $(0,1)$. 

Let $\{w_l\}\subset W^{1,\infty}(0,1)$ be a family of minimizers provided by Proposition \ref{rmk:cl} and with average equal to zero,  and let  $X_{w_l}$ be a set of dislocations associated to $w_l$, defined as in \eqref{diw}. 
Now we want to plug extra dislocations $N_{w_l}$, inducing the macroscopic strain $w'$. In principle, we need periodically distributed dislocations whose density depends on $\alpha_i$. However, some care is needed to ensure that the new dislocations are not plugged on top of the dislocations already present, namely, they should be introduced in such a way that $X_{w_l} \cup N_{w_l}$ gives back an admissible configuration of dislocations. To this purpose, it is easy to see that there exists a finite family of points $N_{w_l}=\{x_1, \ldots x_{N_l}\} \subset (0,1)$ with  $x_i<x_{i+1}$ for all $i$ with the following properties:

\begin{itemize}
\item[i)]  $|x-z| \ge \frac{\delta}{l}$ for all  $x\in N_{w_l}$, $z\in \frac 1l X_{w_l}$;  
\item[ii)]  The distance between any pair of consecutive points is prescribed up to errors of order $\frac{\delta}{l}$ as follows:
\begin{align*} 
& \left||x_{i+1}-x_i| - \frac{\Lambda \delta}{- \alpha_j \sqrt{l}}\right| \le \frac{\delta}{l} \qquad &\text{ if } x_i,\,x_{i+1} \in N_{w_l}\cap I_j, \,  \alpha_j <0,
\\
& \left||x_{i+1}-x_i| - \frac{\lambda \delta}{\alpha_j \sqrt{l}} \right| \le \frac{\delta}{l} \qquad &\text{ if } x_i,\,x_{i+1} \in N_{w_l}\cap I_j, \,  \alpha_j >0.  
\end{align*} 
\item[iii)]  There exists $C>0$ such that, for every open interval $G\subset (0,1)$ with $|G|\ge \frac{C}{\sqrt l}$ we have that, for $l$ large enough,    $N_{w_l}\cap G \neq \emptyset$.
\end{itemize}
In other words, property ii) enstablishes that the distance between points in $N_{w_l}$ is prescribed, of order $\frac{1}{\sqrt{l}}$, and depends on the derivative $\alpha_j$ of $w$ on $I_j$. The errors of order $\frac{\delta}{l}$ are admitted in order to guarantee i). The last condition instead ensures that we cover any interval $I_j$ with points in $N_{w_l}$ without creating holes of order larger than $\frac{C}{\sqrt{l}}$ between two consecutive intervals.

Let $\f:(0,1)\to (0,1+ \frac{\delta N_l}{l} )$ be defined by $\f(x) := x + \frac{\delta}{l}\sharp \{N_{w_l} \cap (0,x)\}$, and, with a little abuse of notation, let $\f^{-1}: (0,1+ \frac{\delta N_l}{l} ) \to (0,1)$     
be the function that coincides with the inverse of $\f$ on its image, and extended to the whole interval $ (0,1+ \frac{\delta N_l}{l})$ so that it is continuous and monotone
(such an extension clearly exists and is unique). 

Now, we let $P(t):= (1-(\f^{-1})' (t))$ for all $t\in (0,1+ \frac{\delta N_l}{l} )$, and set $\uplambda: (0,1) \to \R$ 
to be equal either to $\lambda$ or to $\Lambda$, if $w'(x)$ is positive or negative, respectively. Then, we set 
$$
\tilde w_l(x):=w_l( \f^{-1}(x)), \quad
\tilde g_l (x) := 
 w(0) +
\int_0^x P (t) \sqrt{l}\uplambda(\f^{-1}(t)) \, dt,  \quad \text{ for all } x\in (0,1+ \frac{\delta N_l}{l} ).
$$

Now we are in a position to introduce the recovery sequence 
\begin{equation}\label{defrecs}
g_l:= (\tilde w_l + \tilde g_l)\res [0,1].  
\end{equation}
By construction $g_l$ is admissible; the check is left to the reader. 
\smallskip

First, we show that $\tilde g_l \res (0,1) \to w$ strongly in $L^\infty(0,1)$. More precisely, we shall prove that
\begin{equation}\label{stili}
\|\tilde g_l -  w \|_{L^\infty (0,1)} \le \frac{C}{\sqrt l}
\end{equation}
for some $C\in \R$. 
To this purpose, it is enough to estimate $\tilde g_l(x) - w(x)$ only for $x\in I_1$, since such an estimate can be clearly iterated for the remaining (finite) intervals. 
Without loss of generality, we assume $\alpha_1 >0$. 
By properties  ii) and iii) above we have that, for all $x\in I_1$,
$$
\Big|\sharp \{ N_{w_l} \cap (0,x) \} - x \frac{\alpha_1 \sqrt{l}}{ \lambda \delta} \Big| \le C,
$$
for some constant $C$ independent of $l$. As a consequence, by a change of variables and by its very definition,
\begin{equation}\label{stili2}
\Big |\tilde g_l(\f(x)) - w(0) -  \sharp \{ N_{w_l} \cap (0,x) \} \frac{\delta \lambda}{\sqrt{l}}  \Big |  \le \frac{\delta \lambda}{\sqrt l} \qquad \text{ for all } x\in I_1.  
\end{equation}
Setting $I_1 = [0,p_1]$, since
\begin{equation}\label{f-id}
 |\f(p_1) - p_1| \le \frac{C}{\sqrt{l}},
\end{equation}
we deduce that
\begin{align}\label{stili3}
\Big |w(\f(x)) - w(0) -  \sharp \{ N_{w_l} \cap (0,x) \} \frac{\delta \lambda}{\sqrt{l}}  \Big |   
\le
 |\alpha_1 ( \f(x) -x) | +  \frac{C}{\sqrt l} \le   \frac{C}{\sqrt l} 
 \end{align}
holds for all $x\in \f^{-1} (I_1)\subset I_1$ and $l$ large enough. Moreover, thanks to \eqref{f-id}, we conclude that \eqref{stili3}, in fact, holds true on the whole $I_1$.
This, together with \eqref{stili2} and by triangular inequality yields,
$$
|\tilde g_l(\f(x)) -  w(\f(x)) | \le \frac{C}{\sqrt l} \qquad \text{ for all } x\in I_1,
$$
from which  \eqref{stili} easily follows. 

Now we prove that
\begin{equation}\label{stilh}
\|\tilde g_l -  w \|_{H^{\frac12} (0,1)} \to 0 \quad \text{as} \quad l\to +\infty.
\end{equation}
Given $M\in\N$, we set $J_{\frac Ml}(x) := (x-\frac  Ml, x+ \frac  Ml) \cap[0,1]$ for every $x\in (0,1)$ and define $h_l:= \tilde g_l - w$.  Moreover, we set 
\begin{equation} \label{deffi}
F_l:= \{x\in [0,1] :  \tilde g_l' \equiv 0 \text{ on } J_{\frac Ml}(x) \}\,.
\end{equation}
Note that $|F_l^c|\leq \frac{CM}{\sqrt{l}}$, since the number of points in $N_{w(l)}$ is of order $\sqrt{l}$.
Recalling \eqref{stili}:
\begin{align*}
\|\tilde g_l -  w \|_{\dot H^{\frac12} (0,1)} 
& =
\int_0^1 \Big( \int_{J_{\frac Ml}(x)} \frac{|h_l(x) -h_l(y)|^2}{|x-y|^2} \, dy + \int_{J^c_{\frac Ml}(x)} \frac{|h_l(x) -h_l(y)|^2}{|x-y|^2} \, dy \Big) \, dx
\\
& \le 
\int_{F_l}  \int_{J_{\frac Ml}(x)} C \, dy \, dx+ 
\int_{F^c_l}  \int_{J_{\frac Ml}(x)} Cl \, dy \, dx+ 
\int_0^1 \frac Cl \int_{J^c_{\frac Ml}(x)} \frac{1}{|x-y|^2} \, dy  \, dx
\\
& \le
\frac{CM}{l} + |F^c_l|  \frac{2M}{l}  Cl + \int_0^1  \frac{C }{M}  \, dx
\\
& \le
\frac{CM}{l} +  \frac{C M^2}{\sqrt{l}}   + \int_0^1  \frac{C }{M}  \, dx,
\end{align*}
where in the first inequality we have used that $w'$ is of order $1$,  that $\tilde g_l'$ is of order $\sqrt{l}$ on $F^c_l$, and that $h_l$ is of order $\frac{1}{\sqrt{l}}$. Notice that the last term converges to  $\frac CM$ as $l\to +\infty$. By sending $M\to +\infty$ and recalling \eqref{stili} we hence deduce \eqref{stilh}.

It remains to show that
\begin{equation}\label{boundene}
\limsup_{l \to +\infty} \|\tilde w_l \|^2_{\dot H^{\frac12} (0,1)} \le c.
\end{equation}
Indeed,  from \eqref{boundene} it follows that $\tilde w_l$, up to translations, is pre-compact in $H^{\frac 12}$, and by Proposition \ref{rmk:cl}  it converges, still up to translations,  to zero  in measure. Recalling that $\tilde w_l$ have zero mean, we deduce  that
$\tilde w_l\weak 0$ in $ H^{\frac 12}(0,1)$, that together with \eqref{stilh} and \eqref{defrecs} yields \eqref{sup:1}. Moreover, by \eqref{defrecs},  \eqref{stilh}, and the definition of $c_\infty$,  we deduce \eqref{sup:2} as follows
\begin{align*}
\limsup_{l \to +\infty} F^l(g_l) & = \limsup_{l \to +\infty} \Big[ \|\tilde w_l \|_{\dot H^{\frac 12} (0,1)}^2 + \|\tilde g_l \|_{\dot H^{\frac 12} (0,1)}^2 +
2 \langle \tilde w_l , \tilde g_l \rangle_{\dot H^{\frac 12}}\Big] 
\\
& \le  \limsup_{l \to +\infty} \|\tilde w_l \|_{\dot H^{\frac 12} (0,1)}^2 + \limsup_{l \to +\infty} \|\tilde g_l \|_{\dot H^{\frac 12} (0,1)}^2\\
& \le c_\infty + \| w \|_{\dot H^{\frac 12} (0,1)}^2 = F^\infty(w).
\end{align*}

We will now prove \eqref{boundene}. Since $|\f(s) - \f(t)|\ge |s-t|$ for all $s, \, t\in (0,1)$, we have   
\begin{align}
\nonumber
\|\tilde w_l \|^2_{\dot H^{\frac12} ([0,1 + \frac{\delta N_l}{l}])} 
& \le \int_0^1 \int_0^1 \frac{ ( w_l(s) - w_l(t) )^2}{| s - t|^2}  ds \,  dt 
\\
\label{seca}
& + \sum_{\overset{x_i \neq x_j}{x_i,\, x_j \in N_{w_l}}} 
\frac{\delta^2}{ l^2} \frac{ ( w_l(x_i) - w_l(x_j) )^2}{| x_i - x_j|^2}
+  \frac{2 \delta}{ l} \sum_{x_i\in N_{w_l}}  \int_0^1 \frac{ ( w_l(x_i) - w_l(t) )^2}{| x_i - t|^2} \,   dt.
\end{align}
The first term is uniformly bounded by a constant independent of $l$; Therefore, we have to prove that the terms in \eqref{seca} tend to $0$ as $l\to +\infty$. 
To this purpose, for all $x_i\in N_{w_l}$ we set $I^l_i:= (x_i - \frac{\delta}{2l}, x_i + \frac{\delta}{2l})$, and we denote by $U^l$ their union. 
We have %
\begin{align}
\nonumber
\sum_{\overset{x_i \neq x_j}{x_i,\, x_j \in N_{w_l}}} 
\frac{\delta^2}{ l^2} \frac{ ( w_l(x_i) - w_l(x_j) )^2}{| x_i -  x_j|^2}
& = 
\sum_{\overset{x_i \neq x_j}{x_i,\, x_j \in N_{w_l}}}
\int_{I^l_i} \int_{I^l_j}  \frac{ ( w_l(x_i) - w_l(x_j) )^2}{| x_i -  x_j|^2} \, ds \, dt
\\
\nonumber
& \le 
2 \sum_{\overset{x_i \neq x_j}{x_i,\, x_j \in N_{w_l}}}
\int_{I^l_i} \int_{I^l_j}  \frac{ ( w_l(x_i) - w_l(x_j) )^2}{|s-t|^2} \, ds \, dt
\\
\label{terza}
& \le
C \int_{U^l} \int_{U^l}  \frac{ ( w_l(s) - w_l(t) )^2}{|s-t|^2} \, ds \, dt
\\
\label{quarta}
& + C \sum_{x_i,\, x_j \in N_{w_l}}    \int_{I^l_i} \int_{I^l_j}  \frac{ ( w_l(x_i) - w_l(t))^2}{|s-t|^2} \, ds \, dt 
\end{align}

The term in \eqref{terza} converges to zero, thanks to Lemma \ref{bpd} and via a change of variables. The term in \eqref{quarta} can be easily estimated first integrating in $s$,  exploiting the fact that $w_l'\leq \Lambda\sqrt{l}$, and property ii), so that 
$$
C \sum_{x_i,\, x_j \in N_{w_l}}    \int_{I^l_i} \int_{I^l_j}  \frac{ ( w_l(x_i) - w_l(t))^2}{|s-t|^2} \, ds \, dt  \le 
C (\sharp N_{w_l})^2 \frac{\delta^2}{l^2} \Big(\Lambda \sqrt{l} \frac{\delta}{l} \Big ) ^2  l \le \frac{C}{l},
$$  
which also converges to $0$ as $l\to +\infty$. 
It remains to estimate the second term in \eqref{seca}; we have

\begin{equation}\label{stilu}
\begin{aligned}
\frac{2 \delta}{ l} \sum_{x_i\in N_{w_l}}  \int_0^1 \frac{ ( w_l(x_i) - w_l(t) )^2}{|x_i - t |^2} \,   dt
=
2 \sum_{x_i\in N_{w_l}}  \int_0^1 \int_{I_i} \frac{ ( w_l(x_i) - w_l(t) )^2}{|x_i - t |^2} \, ds \,    dt
\\
= 2 \sum_{x_i\in N_{w_l}}  \int_{I_i} \int_{I_i} \frac{ ( w_l(x_i) - w_l(t) )^2}{|x_i - t |^2} \, ds \,    dt
+
2 \sum_{x_i\in N_{w_l}}  \int_{I_i^c} \int_{I_i} \frac{ ( w_l(x_i) - w_l(t) )^2}{|x_i - t |^2} \, ds \,    dt
\\
= 2 \sum_{x_i\in N_{w_l}}  \int_{I_i} \int_{I_i} \frac{ ( w_l(x_i) - w_l(t) )^2}{| x_i - t|^2} \, ds \,    dt
\\
+
C \sum_{x_i\in N_{w_l}} \Big[ \int_{I_i^c} \int_{I_i} \frac{ ( w_l(x_i) - w_l(s) )^2}{| x_i -  t |^2} \, ds \,    dt
+
\int_{I_i^c} \int_{I_i} \frac{ ( w_l(s) - w_l(t) )^2}{|s - t|^2} \, ds \,    dt \Big] .
\end{aligned}
\end{equation}  
The last term tends to zero, again thanks to Lemma \ref{bpd} and via a change of variables. Moreover, since $w_l'$ is bounded by $C\sqrt{l}$, 
$$
2 \sum_{x_i\in N_{w_l}}  \int_{I_i} \int_{I_i} \frac{ ( w_l(x_i) - w_l(t) )^2}{| x_i - t|^2} \, ds \,    dt \le C \sqrt{l} \frac{1}{l^2} l,
$$
and the right hand side converges to $0$ as $l\to +\infty$. Finally,  integrating in $t$ and using again the Lipschitz continuity of $w_l$, we get
$$
C \sum_{x_i\in N_{w_l}}  \int_{I_i^c} \int_{I_i} \frac{ ( w_l(x_i) - w_l(s) )^2}{| x_i -  t |^2} \, ds \,    dt \le
C \sum_{x_i\in N_{w_l}}  \int_{I_i}  \frac{\quad \frac{C}{l} \quad}{ \frac{\delta}{2l}} \, ds  \le \frac{C}{\sqrt{l}}. 
$$
Casting these estimates in \eqref{stilu}, we deduce that also the last term in \eqref{seca} tends to zero, which in turn yields \eqref{boundene} and concludes the proof of the $\Gamma$-limsup inequality.
\vskip50pt

\section{Periodicity of dislocations on $\mathcal S^1$}\label{periodicity}

\subsection{Admissible configurations and the energy functional}
In order to study the optimal positioning of dislocations we restrict ourselves to the analysis of a simplified model. 
Roughly speaking, we neglect boundary effects by working on $\mathcal S^1$; then, we will consider the limit of the  energy induced by a finite number of dislocations (on $\mathcal S^1$) as $\Lambda\to +\infty$. 
To this purpose, we  consider the new distance on $(0,1)$ defined by 
\[
d(x,y)=\min \{|x-y|, 1-|x-y|\}.
\]
We fix the number $N\in\N$ of dislocations, 
and  consider families of points $(x_1,\dots,x_N)\in [0,1]^N$ which represent the dislocation positions. For convenience we will ``cut and paste'' the dislocations on the whole $\R$,  by setting
\begin{equation}\label{points_yi}
\{ y_i\}_{i \in I}:= \{y\in \R: y= x_j+ k, \, 1\le j\le N, \, k\in\Z \} \, .
\end{equation}
Assume now 
\begin{equation}\label{deltaN}
\delta=\frac{\lambda}{N(\lambda+\Lambda)}. 
\end{equation}
The class of admissible displacements is defined as
\begin{equation}
\begin{aligned}
 \mathcal V^\Lambda :=\Big\{& v   \in W^{1,\infty}(\R):\exists \,(x_1,\dots,x_N)\in(0,1)^N:d(x_i,x_j)\geq \delta \,\, \forall \, i\neq j \,, \\
&\;v'=\lambda-(\lambda+\Lambda)\sum_{i \in I}\rchi_{E_i},\;E_i:=\left(y_i-\frac{\delta}{2},y_i+\frac{\delta}{2}\right)\Big\} \,,
\end{aligned}
\end{equation}
where the points $y_i$ are defined in \eqref{points_yi}. 
Notice that by definition the function $v'$ is periodic on $\R$ with period equal to $1$, and that the condition 
$\delta=\frac{\lambda}{N(\lambda+\Lambda)}$  enforces $v(0)=v(1)$;
in this way  $v$ is periodic with period equal to $1$ and continuous on the whole $\R$. The energy of the system is given by
\begin{align}
E(v):=\int_0^1\int_0^1\frac{|v(x)-v(y)|^2}{d(x-y)^2}dxdy,
\end{align}
and can be equivalently expressed as in the following Lemma.

\begin{lemma} \label{lem:change}
	For $v \in  \mathcal V^\Lambda $ we have that
	\begin{equation} \label{lem:change:th}
	E(v) = \int_0^1 \int_{ - \frac12}^{ \frac12} \frac{|h(y+z)-h(y)|^2}{|z|^2} \, dz dy - \lambda^2 \,,
	\end{equation}
	where $h(t):=v(t)-\lambda t$. 
\end{lemma}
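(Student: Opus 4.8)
The plan is to compute $E(v)$ by exploiting the $1$-periodicity of $v$ together with the definition of the distance $d$. First I would observe that on the square $[0,1]^2$ one has $d(x-y) = |x-y|$ only when $|x-y| \le \tfrac12$, while for $|x-y| > \tfrac12$ one has $d(x-y) = 1 - |x-y| = |x - y \mp 1|$; in the latter regime, by periodicity $v(y) = v(y\pm 1)$, so the integrand equals $|v(x)-v(y\pm 1)|^2/|x-(y\pm1)|^2$. The natural move is therefore the change of variable $z = x - y$ (for fixed $y$, or symmetrically $y$ for fixed $x$) and to fold the region $\{|x-y|>\tfrac12\}$ onto $\{|z|\le\tfrac12\}$ via $z \mapsto z \mp 1$. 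After this folding, using periodicity to write everything in terms of $v$, one should get
\[
E(v) = \int_0^1 \int_{-\frac12}^{\frac12} \frac{|v(y+z) - v(y)|^2}{|z|^2}\, dz\, dy,
\]
where $v$ on the right is understood as the $1$-periodic extension to $\R$ (so $v(y+z)$ makes sense for $y \in [0,1]$, $z \in [-\tfrac12,\tfrac12]$).

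Next I would substitute $v(t) = h(t) + \lambda t$. Then $v(y+z) - v(y) = h(y+z) - h(y) + \lambda z$, so
\[
|v(y+z)-v(y)|^2 = |h(y+z)-h(y)|^2 + 2\lambda z\,(h(y+z)-h(y)) + \lambda^2 z^2.
\]
Dividing by $|z|^2$ and integrating, the third term contributes $\lambda^2 \int_0^1\int_{-\frac12}^{\frac12} dz\, dy = \lambda^2$, which accounts for the $-\lambda^2$ appearing (with the correct sign once moved to the other side — here it appears on the right of \eqref{lem:change:th} with a minus, so in fact the $+\lambda^2$ from this term must be cancelled, which forces the cross term plus something to contribute $-2\lambda^2$, i.e. I should be careful with the exact bookkeeping). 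The cross term $2\lambda \int_0^1 \int_{-\frac12}^{\frac12} \frac{z}{|z|^2}(h(y+z)-h(y))\, dz\, dy = 2\lambda \int_0^1 \int_{-\frac12}^{\frac12} \frac{\mathrm{sgn}(z)}{|z|}(h(y+z)-h(y))\, dz\, dy$ is the delicate one: here $h$ is itself $1$-periodic (since $v' - \lambda$ is $1$-periodic with zero mean over a period, by \eqref{deltaN}), so $h(y+1) = h(y)$, and one can try to symmetrize in $z \mapsto -z$ and in $y \mapsto y + z$ or use Fubini and the periodicity of $h$ to evaluate or simplify this term. I expect that, after the symmetrization, this cross term either vanishes or combines with part of the $\lambda^2 z^2/|z|^2$ term to produce exactly the constant $-\lambda^2$ on the right-hand side; pinning down this cancellation is the main obstacle, and the cleanest route is probably to write $h = v - \lambda t$ only \emph{after} folding, and to track the constant by testing the identity on the simplest admissible $v$ (e.g. a single dislocation), thereby fixing the sign and magnitude of the constant unambiguously.

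A cleaner alternative I would consider in parallel: expand directly in terms of $v$ on the folded domain and note that
\[
\int_0^1\int_{-\frac12}^{\frac12}\frac{|v(y+z)-v(y)|^2}{|z|^2}\,dz\,dy
\]
with $v(t) = h(t)+\lambda t$ and $h$ $1$-periodic; since $|v(y+z)-v(y)|^2 = |h(y+z)-h(y)+\lambda z|^2$ and the full integrand is even in $z$ after averaging over $y\in[0,1]$ (because $\int_0^1 f(y+z)\,dy = \int_0^1 f(y)\,dy$ for $1$-periodic $f$, which kills the odd-in-$z$ cross term upon integrating in $y$), the cross term integrates to zero. Hence the right-hand side of \eqref{lem:change:th} equals $E(v) + \lambda^2 - \lambda^2 = $ exactly $E(v)$... which again shows the $-\lambda^2$ must come from carefully identifying $\int_0^1\int_{-1/2}^{1/2}\lambda^2\,dz\,dy = \lambda^2$ and subtracting it, i.e. the asserted identity is precisely $E(v) = \big[\text{RHS integral}\big] - \lambda^2$ once the cross term is shown to vanish by the periodicity-in-$y$ averaging trick. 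So the structure of the proof is: (1) fold the domain using $d$ and periodicity of $v$ to rewrite $E(v)$ as the integral over $[0,1]\times[-\tfrac12,\tfrac12]$ of $|v(y+z)-v(y)|^2/|z|^2$; (2) substitute $v = h + \lambda\cdot\mathrm{id}$; (3) kill the cross term by integrating in $y$ first and invoking $1$-periodicity of $h$; (4) evaluate the leftover $\lambda^2 z^2/|z|^2$ term to the constant $\lambda^2$ and move it across. Step (1), the folding, is where I expect to spend the most care, since one must correctly split $[0,1]^2$ into $\{|x-y|\le\tfrac12\}$ and its complement and verify the two pieces of the complement map bijectively onto the strip under $z \mapsto z\pm1$ with the right Jacobian.
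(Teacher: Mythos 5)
Your step (1) — folding $[0,1]^2$ onto $[0,1]\times[-\tfrac12,\tfrac12]$ using the $1$-periodicity of $v$ and the definition of $d$ — is correct and is exactly how the paper begins. The genuine gap is in your treatment of the cross term, and it stems from a false claim: $h$ is \emph{not} $1$-periodic. Condition \eqref{deltaN} makes $\int_0^1 v'=0$, so it is $v$ that is $1$-periodic; consequently $h(t+1)=v(t+1)-\lambda(t+1)=h(t)-\lambda$, and $\int_0^1\bigl(h(y+z)-h(y)\bigr)\,dy=\int_0^z\bigl(h(y+1)-h(y)\bigr)\,dy=-\lambda z$, not $0$. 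Hence the cross term
\[
2\lambda\int_0^1\int_{-\frac12}^{\frac12}\frac{h(y+z)-h(y)}{z}\,dz\,dy=2\lambda\int_{-\frac12}^{\frac12}\frac{-\lambda z}{z}\,dz=-2\lambda^2
\]
does not vanish; it is precisely the paper's computation \eqref{lem:change:2}, and it is the whole point of the lemma: combined with the $+\lambda^2$ coming from $\lambda^2z^2/|z|^2$, it produces the net $-\lambda^2$ in \eqref{lem:change:th}. If the cross term vanished as you assert, you would get $E(v)=\bigl[\text{RHS integral}\bigr]+\lambda^2$, i.e.\ the identity with the wrong sign on the constant (off by $2\lambda^2$); your final bookkeeping ``$E(v)+\lambda^2-\lambda^2$'' silently double-counts to force the right answer. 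You half-sensed the problem (``I should be careful with the exact bookkeeping''), but the resolution you propose rests on the incorrect periodicity of $h$, so the central cancellation is never actually established.

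For what it is worth, a correct version of your symmetry shortcut does exist: expand in the opposite direction, $|h(y+z)-h(y)|^2=|v(y+z)-v(y)|^2-2\lambda z\bigl(v(y+z)-v(y)\bigr)+\lambda^2z^2$. There the cross term involves $\int_0^1\bigl(v(y+z)-v(y)\bigr)\,dy$, which \emph{does} vanish because $v$ is genuinely $1$-periodic, giving directly $\int_0^1\int_{-1/2}^{1/2}|h(y+z)-h(y)|^2/|z|^2\,dz\,dy=E(v)+\lambda^2$ and hence the lemma. Either way, the non-periodicity of $h$ (equivalently, the drift $h(t+1)=h(t)-\lambda$) must enter the argument explicitly; it cannot be averaged away.
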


\begin{proof}
Since $v$ and $d$ are both 1-periodic, then the energy can be computed as
\[
E(v)= \int_0^1 \int_{y-\frac12}^{y+\frac12} \frac{|v(x)-v(y)|^2}{|x-y|^2} \, dx \, dy,
\]
as $d(x-y)=|x-y|$ on the integration domain. Note that
\begin{equation}\label{integranda}
\frac{|v(x)-v(y)|^2}{|x-y|^2} = \lambda^2 + \frac{|h(x)-h(y)|^2}{|x-y|^2} + 2 \lambda \, 
\frac{h(x)-h(y)}{x-y} \,.
\end{equation}
Now recall that $v$ is $1$-periodic, so that $h(y+1)=h(y)-\lambda$. Therefore, by introducing the new variable $z:=x-y$ we have

\begin{equation} \label{lem:change:2}
\begin{gathered}\int_0^1 \int_{ y- \frac12}^{y+ \frac12} \frac{h(x)-h(y)}{x-y} \, dx \, dy   =
\int_{ - \frac12}^{ \frac12} \frac1z \left( \int_0^1 h(y+z) - h(y) \, dy \right)\, dz \, = 
\\
 \int_{ - \frac12}^{ \frac12} \frac1z \left( \int_0^z h(y+1) - h(y) \, dy \right)\, dz \, =
-\lambda. 
\end{gathered}
\end{equation}

Integrating both sides in \eqref{integranda} and again by the change of variable $z:=x-y$,  in view of 
\eqref{lem:change:2}
we conclude \eqref{lem:change:th}.  
\end{proof}

In view of the above lemma, we introduce the class
$$
\mathcal H^\Lambda:=\{ v - \lambda \, Id, \, v \in \mathcal V^\Lambda \} \, . 
$$

Now we are interested in considering the limit as $\Lambda \to +\infty$ of the proposed model; this, recalling \eqref{deltaN}, corresponds to sending $\delta \to 0$.  
Notice that if $h^\Lambda  \in  \mathcal  H^\Lambda$, 
then, up to a subsequence and up to additive constants,  $h^\Lambda$ converges strongly in $L^1(0,1)$ (and in fact in all $L^p$, $p<\infty$) and pointwise almost everywhere to a step function $h$,  as $\Lambda \to +\infty$.
Therefore (since $h$  is not constant) $\|h \|_{\dot H^{\frac 12}}=+\infty$.
To overcome this problem we cut off the core region around dislocation points.
Specifically, for 
fixed $\rho>\frac\delta2 >0$ and $\Lambda>0$ (large enough) we consider the energy functionals $E^\Lambda_{\rho}: \mathcal H^\Lambda \to [0,+\infty)$ defined by 
\begin{align}
E^\Lambda_{\rho}(h) :=  \int_0^1 \int_{ - \frac12}^{-\rho} \frac{|h(y+z)-h(y)|^2}{|z|^2} \, dz dy +
\int_0^1 \int_{ \rho}^{ \frac12} \frac{|h(y+z)-h(y)|^2}{|z|^2} \, dz dy.
\end{align}
Then we study the convergence of the functionals $E^\Lambda_\rho$ as $\Lambda\rightarrow +\infty$.

Let us set $I_\rho:=(-\frac12,-\rho)\cup(\rho,\frac12)$. The functionals above read
\begin{align*}
E^\Lambda_{\rho}(h) 
=\|\Delta_h\|_{L^2([0,1] \times I_\rho)}^2 , 
\end{align*}
where $\Delta_h(y,z)=\frac{h(y+z)-h(y)}{z}$. 
Notice that $h(0) - h(1) = \lambda$ and that the slope of $h$ on $[0,1]\times I_\rho$ is less or equal to $\frac{h(y+z)-h(y)}{\rho}\leq \frac{\lambda}{\rho}$. Hence up to adding  a suitable constant to $h$ we have

\begin{equation}\label{BVbound}
\|h\|_{BV(0,1)} \le  2 \lambda, \qquad    \|\Delta_h(y,z)\|_{L^\infty([0,1] \times I_\rho)} \leq  \frac{\lambda}{\rho} \qquad 
\text{ for all } h\in \mathcal H^\Lambda.
\end{equation}

Let now $ h^\Lambda \in \mathcal H^\Lambda$; up to subsequences,  $h^\Lambda\rightarrow h$ strongly in $L^p(0,1)$, for all $p<\infty$. Without loss of generality we may assume $h^\Lambda\rightarrow h$ a.e. so that $\Delta_{h^\Lambda}\rightarrow \Delta_h$ a.e. in $[0,1]\times I_\rho$. Thanks to the boundedness \eqref{BVbound} we infer $\Delta_{h^\Lambda}\rightarrow \Delta_h$ strongly in $L^2$ and we conclude
\begin{align}\label{Erho}
E^\Lambda_\rho(h^\Lambda)\rightarrow E_\rho(h):=\int \int_{[0,1]\times I_\rho} \frac{|h(y+z)-h(y)|^2}{|z|^2} \, dz dy.
\end{align}

Therefore, the asymptotic behavior of minimizers of   $E^\Lambda_\rho$, as $\Lambda \to +\infty$, is described by the ground states of the  
more tractable functionals $E_\rho$ defined on step functions.

In order to  study the periodicity of minimizers of the energy introduced above, it is convenient to  
rewrite  the energy as a function of the dislocation points.  
We assume that the dislocations are at a minimal distance $\rho$ with $\frac 1N \ge  \rho>0$. Then, we introduce the class of admissible dislocations $\AD^N_{\rho}$ defined as
$$
\AD^N_{\rho}:= \{ \{x_1, \ldots, \,x_N\} \subset [0,1): \, d(x_i,x_j) \ge \rho \quad  \text{ for all } i\neq j\}.
$$
Given $X\in \AD^N_{\rho}$, we set 
$$
Y(X) := \{y_i \in \R: y_i = x_j+ k, \, 1\le j\le N, \,  k\in\Z \} \, .
$$

Now, the energy $E_\rho$ can be regarded as a function of the dislocation points: We introduce the energy functional $\mathcal E^N_{\rho}:
\AD^N_{\rho} \to \R$ defined by 
\begin{equation} \label{kamehameha}
\mathcal E^N_{\rho}(X) = E_\rho(h_X) \quad \text{ for all } X= \{x_1,\dots,x_N\},
\end{equation}
where $h_X \in BV_{loc}(\R)$ is defined, up to an additive constant, by the condition 
$$
h_X'=-\frac{\lambda}{N} \sum_{y\in Y(X)} \delta_{y},
$$
and $E_\rho$ is defined in \eqref{Erho}. 
With a little abuse of notation, given $ X= \{x_1,\dots,x_N\}\in \AD^N_{\rho} $ we will also write 
$\mathcal E^N_{\rho}(x_1,\dots,x_N) = \mathcal E^N_{\rho}(X)$.

The following theorem establishes that the energy $\mathcal E^N_{\rho}$ is minimized on configuration of equi-spaced dislocations.

\begin{theorem} \label{thm:periodicity}
Let $0<\rho<\frac 1N$. The energy $\mathcal E^N_{\rho}$ at \eqref{kamehameha} admits a  minimizer $X\in \AD^N_{\rho}$; moreover each minimizer is of the form $X= \{x_1,\dots,x_N\}$, where  $x_1 < \dots <x_N$ and $d(x_i,x_{i+1}) = \frac{1}{N}$ for every $i = 1, \dots, N-1$. 
\end{theorem}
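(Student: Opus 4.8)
The plan is to rewrite $\mathcal E^N_{\rho}$ as a pairwise interaction energy with an explicit convex kernel, and then to combine convexity with the cyclic symmetry of the problem. First, a Fourier form of the energy: given $X=\{x_1,\dots,x_N\}\in\AD^N_{\rho}$, let $v$ be the $1$-periodic function with $v'=\lambda-\frac{\lambda}{N}\sum_{j=1}^N\delta_{x_j}$ (periodized), so that $h_X=v-\lambda\,Id$ agrees with the definition preceding \eqref{kamehameha} (note $v$ is indeed $1$-periodic since $\frac{\lambda}{N}\cdot N=\lambda$). Expanding $v$ in Fourier series on $\R/\Z$ gives $\hat v_k=-\frac{\lambda}{2\pi i kN}S_k$ for $k\neq0$, with $S_k:=\sum_{j=1}^N e^{2\pi i kx_j}$. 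Substituting $h_X(y+z)-h_X(y)=v(y+z)-v(y)-\lambda z$ into $E_\rho$, the mixed term $\int_0^1\int_{I_\rho}\frac{v(y+z)-v(y)}{z}\,dz\,dy$ vanishes (integrate first in $y$ over a period), and Parseval in $y$ gives
\[
E_\rho(h_X)=\lambda^2(1-2\rho)+\frac{\lambda^2}{4\pi^2 N^2}\sum_{k\neq0}\frac{A_k(\rho)}{k^2}\,|S_k|^2,\qquad A_k(\rho):=\int_{I_\rho}\frac{4\sin^2(\pi kz)}{|z|^2}\,dz\in(0,8/\rho].
\]
Since $\sum_{k\neq0}A_k(\rho)/k^2<\infty$, expanding $|S_k|^2=N+2\sum_{i<j}\cos(2\pi k(x_i-x_j))$ yields
\[
\mathcal E^N_{\rho}(X)=c_0+\frac{\lambda^2}{2\pi^2 N^2}\sum_{1\le i<j\le N}\Psi(x_i-x_j),\qquad \Psi(t):=\sum_{k\neq0}\frac{A_k(\rho)}{k^2}\cos(2\pi kt),
\]
with $c_0$ depending only on $N,\rho,\lambda$, and $\Psi$ continuous, even and $1$-periodic.

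Next I would establish that $\Psi$ is convex on $[\rho,1-\rho]$ and strictly convex on its interior. Using $4\sin^2(\pi kz)\cos(2\pi kt)=2\cos(2\pi kt)-\cos(2\pi k(t+z))-\cos(2\pi k(t-z))$, one rewrites $\Psi(t)=2\int_\rho^{1/2}z^{-2}\bigl(2P(t)-P(t+z)-P(t-z)\bigr)\,dz$, where $P(\beta):=\sum_{k\neq0}k^{-2}\cos(2\pi k\beta)=2\pi^2\bigl(\{\beta\}^2-\{\beta\}+\tfrac16\bigr)$ is the periodized Bernoulli polynomial ($\{\cdot\}$ the fractional part), satisfying $P''=4\pi^2-4\pi^2\sum_{n\in\Z}\delta_n$ in the distributional sense. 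Hence, writing $\delta_\Z:=\sum_{n\in\Z}\delta_n$ for the Dirac comb, for $t\in(0,1)$,
\[
\Psi''(t)=8\pi^2\int_\rho^{1/2}\frac{\delta_\Z(t-z)+\delta_\Z(t+z)}{z^2}\,dz\ \ge\ 0,
\]
so $\Psi$ is convex on $(0,1)$, hence on $[\rho,1-\rho]$; evaluating the integral one finds $\Psi''(t)=8\pi^2/\min(t,1-t)^2>0$ for $t\in(\rho,1-\rho)$, so $\Psi$ is moreover $C^2$ and strictly convex there. This is exactly the range that matters: for $x_1<\dots<x_N$ set $\ell_i:=x_{i+1}-x_i$ ($i<N$) and $\ell_N:=1-x_N+x_1$; then for $i<j$ the arc $a_{ij}:=x_j-x_i=\ell_i+\dots+\ell_{j-1}$ is a sum of $j-i\ge1$ gaps whose complement $1-a_{ij}$ is a sum of $N-(j-i)\ge1$ gaps, so the constraint $d(x_i,x_j)\ge\rho$ forces $a_{ij}\in[\rho,1-\rho]$, while $\Psi(x_i-x_j)=\Psi(a_{ij})$ by evenness.

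Then I would recast the problem as a convex minimization over gap vectors and use cyclic averaging. With $\ell=(\ell_1,\dots,\ell_N)$ as above, $\mathcal E^N_{\rho}(X)=c_0+\frac{\lambda^2}{2\pi^2N^2}\Phi(\ell)$ where $\Phi(\ell):=\sum_{i<j}\Psi(a_{ij}(\ell))$ is defined on the compact polytope $\Pi:=\{\ell\in\R^N:\ell_i\ge\rho\ \forall i,\ \sum_i\ell_i=1\}$; since each $a_{ij}$ is affine in $\ell$ with values in $[\rho,1-\rho]$, $\Phi$ is convex on $\Pi$. Continuity of $\Phi$ and compactness of $\Pi$ give a minimizer — and, as every $\ell\in\Pi$ is realized by some $X\in\AD^N_{\rho}$, a minimizer of $\mathcal E^N_{\rho}$. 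The cyclic shift $\sigma(\ell_1,\dots,\ell_N)=(\ell_2,\dots,\ell_N,\ell_1)$ preserves $\Pi$ and, being a relabeling of dislocations, preserves $\Phi$; for any $\ell\in\Pi$ the average $\bar\ell:=\frac1N\sum_{m=0}^{N-1}\sigma^m\ell$ has all coordinates equal to $\frac1N$, lies in $\Pi$ because $\rho<\frac1N$, and by convexity
\[
\Phi(\bar\ell)=\Phi\Bigl(\tfrac1N\sum_{m=0}^{N-1}\sigma^m\ell\Bigr)\le\frac1N\sum_{m=0}^{N-1}\Phi(\sigma^m\ell)=\Phi(\ell),
\]
so the equispaced configuration is a global minimizer. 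For uniqueness, at $\bar\ell$ one has $a_{ij}(\bar\ell)=\frac{j-i}{N}$ with $\rho<\frac1N\le\frac{j-i}{N}\le\frac{N-1}{N}<1-\rho$, so every $a_{ij}(\bar\ell)$ lies in the open interval where $\Psi''>0$, and $\bar\ell$ lies in the relative interior of $\Pi$; hence $\Phi$ is $C^2$ near $\bar\ell$ and its Hessian restricted to $T=\{v:\sum_iv_i=0\}$ equals $\sum_{i<j}\Psi''(a_{ij}(\bar\ell))\,(v_i+\dots+v_{j-1})^2$, which vanishes on $T$ only if $v_i+\dots+v_{j-1}=0$ for all $i<j$ — and taking $j=i+1$ this forces $v=0$ — so it is positive definite on $T$. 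A convex function on a convex set with positive definite Hessian at a point attains its minimum uniquely there (any other minimizer would make $\Phi$ constant on a segment reaching a neighbourhood of $\bar\ell$ on which $\Phi$ is strictly convex). Therefore every minimizer of $\mathcal E^N_{\rho}$ has gap vector $(\tfrac1N,\dots,\tfrac1N)$, that is $d(x_i,x_{i+1})=\tfrac1N$ for all $i$.

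The main obstacle is the convexity of the kernel $\Psi$ on precisely the range $[\rho,1-\rho]$: this forces one to recognise $\Psi$ as a doubly-averaged periodized Bernoulli polynomial and to control the negative Dirac masses of $P''$ sitting at the integers — these are what could, a priori, spoil convexity, but never do inside $(\rho,1-\rho)$. The hypothesis $\rho<\tfrac1N$ is used essentially twice: it keeps all pairwise arc-distances $a_{ij}$ inside the convexity interval of $\Psi$, and it places the equispaced gap vector in the relative interior of $\Pi$, which is what makes the Hessian argument for uniqueness valid.
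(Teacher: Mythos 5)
Your proof is correct, and it reaches the paper's conclusion by a genuinely different route in the key reformulation step. The paper does not compute $\mathcal E^N_{\rho}$ in closed form: it computes the \emph{first variation} $\partial_{x_i}\mathcal E^N_{\rho}$ directly from the definition, recognises it as the gradient of the explicit pairwise functional $\tilde{\mathcal E}^N_{\rho}(X)=2\sum_{i\neq j}f(|x_j-x_i|)$ with $f(y)=-\log\bigl(|y|\wedge(1-|y|)\bigr)+2\bigl(|y|\wedge(1-|y|)\bigr)$, and then minimises $\tilde{\mathcal E}^N_{\rho}$ by splitting it into the shells $G_k$ of pairs with $|i-j|\equiv k$ and applying Jensen to the strictly convex $f$ under the constraint $\sum_i d_i=k$. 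You instead obtain an \emph{exact} identity $\mathcal E^N_{\rho}(X)=c_0+\frac{\lambda^2}{2\pi^2N^2}\sum_{i<j}\Psi(x_i-x_j)$ via Parseval, and your kernel is the same one in disguise: carrying out your integral of the periodized Bernoulli polynomial gives $\Psi(t)=8\pi^2\bigl(-\log(t\wedge(1-t))+2(t\wedge(1-t))\bigr)+\mathrm{const}$ on $[\rho,1-\rho]$, so $\Psi''(t)=8\pi^2/\min(t,1-t)^2$ as you state. Your final step (cyclic averaging of the gap vector over $\Pi$ plus a Hessian argument at $\bar\ell$) is morally the same Jensen argument as the paper's Step 3, just packaged globally rather than shell by shell. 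What your approach buys: it bypasses the lengthy first-variation computation and, more importantly, the implicit step that two functionals with equal gradients on the (connected) configuration space differ by a constant — you get equality of the energies outright, and the origin of the logarithmic and linear terms in the kernel becomes transparent. What the paper's approach buys: it is Fourier-free and exhibits the equilibrium as a force balance on each dislocation. The only points worth making fully explicit in a final write-up are the justification of the distributional identity for $\Psi''$ (term-by-term differentiation is legitimate since the Fourier series of $\Psi$ converges uniformly, or one can simply integrate the Bernoulli polynomials explicitly as above) and the observation that translation invariance of $E_\rho$ on $\mathcal S^1$ is what allows you to pass from configurations $X$ to gap vectors $\ell\in\Pi$.
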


\subsection{Proof of Theorem \ref{thm:periodicity}}
In order to prove Theorem \ref{thm:periodicity}, we first provide an equivalent formulation for the energy $\mathcal E^N_{\rho}$ at \eqref{kamehameha}, and subsequently exploit its convexity.
Instead of manipulating $\mathcal E^N_{\rho}$ directly, which seems to be an involved and tedious task, we start by computing its first variation in Step 1. After that, we show how the first variation of $\mathcal E^N_{\rho}$ coincides with the one of a new functional $\tilde{\mathcal E}^N_{\rho}$, which is easier to manipulate. Finally, in Step 3, we show that $\tilde{\mathcal E}^N_{\rho}$ is minimized when the points $x_1,\ldots,x_N$ are evenly spaced on $\mathcal{S}^1$: This fact is deduced after noting that $\tilde{\mathcal E}^N_{\rho}$ is a convex function of the $\mathcal{S}^1$-distances between points $x_i$.

\smallskip

\textit{Step 1. Computing the first variation.}
Since $\lambda$ is fixed we assume for simplicity of notation that $\frac{\lambda}{N}=1$. Let us fix a configuration 
$X=\{x_1,\dots,x_N\} \in \AD_\rho^N$, such that 
$d(x_i,x_j) > \rho$ for each $i \neq j$. Fix $i$ and consider the first variation of the energy 
\begin{align}\label{variation}
\lim_{\e\rightarrow 0} \frac{1}{\e}\big(\mathcal E^N_{\rho}(x^\e_1,\dots,x^\e_N)-\mathcal E^N_{\rho}(x_1,\dots,x_N)\big),
\end{align}
where 
\begin{align}
x^\e_j:=\begin{cases}
      x_j&\text{if }j\neq i,\\
      x_{i}+\e&\text{if }j=i.
     \end{cases}
\end{align}
In order to compute the limit in \eqref{variation} we introduce the function $h^\e$ defined, up to additive constants, by  $(h^\e)':=-\sum_i\delta_{x^\e_i}$. 
Let us restrict our analysis to the case $\e>0$, the other case is similar and will yield the same result. Set $h:=h_X$ and notice that
\begin{align}\label{dep}
&h^\e-h = D^\e:=
     \chi_{(x_i,x_i+\e)}.
           \end{align}
Therefore we can write
\begin{align}
\nonumber
\mathcal E^N_{\rho}(x^\e_1,\dots,x^\e_N)-\mathcal E^N_{\rho}(x_1,\dots,x_N)=
&\int_0^1 \int_{I_\rho}\frac{|h^\e(x+z)-h^\e(x)|^2-|h(x+z)-h(x)|^2}{|z|^2}dzdx\\
\label{variation_E}
&=\int_0^1\int_{I_\rho}\frac{(S^\e(x+z)-S^\e(x))(D^\e(x+z)-D^\e(x))}{|z|^2}dzdx,
\end{align}
where we have set
$ S^\e(t)=h(t)+h^\e(t)$, while $D^\e$ is defined in \eqref{dep}.
For $\e$ small enough, we have
\begin{align}
&D^\e(x+z)-D^\e(x)=\begin{cases}
                   -1&\text{if }x\in (x_i,x_i+\e),\\
                   1&\text{if }x+z\in(x_i,x_i+\e),\\
                   0&\text{otherwise.}
                  \end{cases}
     \end{align}

Hence, by \eqref{dep} and \eqref{variation_E} we get
\begin{equation} \label{3l6}
\begin{aligned}
&\mathcal E^N_{\rho}(x^\e_1,\dots,x^\e_N)-\mathcal E^N_{\rho}(x_1,\dots,x_N) \\
&=\int_{I_\rho}\frac{1}{|z|^2}\int_{x_i-z}^{x_i-z+\e}(S^\e(x+z)-S^\e(x))dx-\int_{x_i}^{x_i+\e}(S^\e(x+z)-S^\e(x)) \, dx\,dz \\
&=\int_{I_\rho}\frac{1}{|z|^2}\int_{x_i}^{x_i+\e}(S^\e(x)-S^\e(x-z))-(S^\e(x+z)-S^\e(x)) \, dx \, dz \\
&=2\int_\rho^{\frac12}\frac{1}{|z|^2}\int_{x_i}^{x_i+\e}(S^\e(x)-S^\e(x-z))-(S^\e(x+z)-S^\e(x)) \, dx \, dz \\
&=-2\int_{I_\rho} \frac{1}{|z|^2}\int_{x_i}^{x_i+\e}(S^\e(x+z)-S^\e(x)) \, dx \, dz  \, .
\end{aligned}
\end{equation}
With the aid of an integration by parts, the previous expression equals
\begin{equation}\label{3lines}
\begin{aligned}
&-2\int_{x_i}^{x_i+\e}\frac{S^\e (x+\rho )-S^\e(x)}{\rho}+\frac{S^\e(x-\rho)-S^\e(x)}{\rho}dx\\
&+4\int_{x_i}^{x_i+\e}S^\e \left(x+\frac12 \right)+S^\e \left(x-\frac12 \right)-2S^\e(x)dx\\
&-2\int_{I_\rho}  \int_{x_i}^{x_i+\e}
\frac{\dot S^\e(x+z)}{z} \, dx \, dz \, .
\end{aligned}
\end{equation}
Exploiting the fact that the $\mathcal{S}^1$-distance between the points $x_j$ is larger than $ \rho$, we easily see that, for $\e$ small enough,  $S^\e(x+\rho)-S^\e(x)=-1$  and 
$S^\e(x-\rho)-S^\e(x)=1$ for $x\in (x_i,x_i+\e)$, so that the first line is null.

As for the second line, we will compute it as $\e$ is small; we first see that the values of $S^\e(x-\frac12)$ and $S^\e(x+\frac12)$ do not depend on $\e$ and equal $2h(x-\frac12)$ and $2h(x+\frac12)$, respectively. Moreover $S^\e(x)$ is constant on $(x_i,x_i+\e)$ and coincides with $2h(x)+1=2h^+(x_i)+1$ (where $h^+(t)=\lim_{s\rightarrow t^+}h(s)$), so that
\begin{equation} \label{3l1}  
\begin{aligned}
\lim_{\e\rightarrow0}  \, \frac{1}{\e} & \int_{x_i}^{x_i+\e}S^\e\left(x+\frac12\right)+S^\e\left(x-\frac12\right)-2S^\e(x) \,dx\\
&=2h^+\left(x_i-\frac12 \right)+2h^+\left(x_i+\frac12 \right)-4h^+(x_i)-2.
\end{aligned}
\end{equation}
We write $2h^+(x_i)+1=h^+(x_i)+h^-(x_i)$, so that 
\begin{equation}\label{3l2}
\begin{aligned}
2h^+ \Big(x_i-\frac12 \Big)+2h^+ \Big(x_i+\frac12 \Big)-  & 4h^+(x_i)-2
\\
=2\Big (  h^+ \Big (x_i-\frac12 & \Big)- h^-(x_i) \Big)+2 \Big (h^+ \Big(x_i+\frac12 \Big)-h^+(x_i) \Big),
\end{aligned}
\end{equation}
and we observe that 
\begin{equation}\label{3l3}
\begin{gathered}
h^+\Big(x_i-\frac12\Big)-h^-(x_i)=\sharp  \Big( Y(X) \cap (x_i-\frac12,x_i) \Big),
\\
h^+\Big(x_i+\frac12\Big)-h^+(x_i)=-\sharp  \Big( Y(X) \cap (x_i,x_i+\frac12] \Big)\,.
\end{gathered}
\end{equation}
By \eqref{3l1}, \eqref{3l2} and \eqref{3l3} we conclude that
\begin{align} \label{3l5}
 &\lim_{\e\rightarrow0}\frac{4}{\e}\int_{x_i}^{x_i+\e}S^\e(x+\frac12)+S^\e(x-\frac12)-2S^\e(x)dx=-8\Delta(x_i),
\end{align}
where we have set 
\begin{align}
\Delta (x_i):=\sharp  \Big( Y(X) \cap (x_i,x_i+\frac12] \Big) 
- 
\sharp  \Big( Y(X) \cap (x_i-\frac12,x_i) \Big)\, .
\end{align}
Let us finally analyse the last line in \eqref{3lines}. To do this we first recall that 
$$\dot S^\e=\Big(-2\sum_{y\in Y(X), \, y\neq x_i} \delta_{y} \Big)- \delta_{x_i+\e}-\delta_{x_i},
$$ 
which yields,
for a.e. $x\in (x_i,x_i+\e)$,
\begin{align} \label{60}
\int_{x+\rho}^{x+\frac12}\frac{\dot S^\e(z)}{z-x}dz+\int_{x-\frac12}^{x-\rho}\frac{\dot S^\e(z)}{z-x}dz
= -2  \!\!\!\!\! \sum_{\substack{y\in (x+\rho,x+\frac12 ] \\ y \in Y(X) } }\frac{1}{y-x}   \quad-  2  \!\!\!\!\!  \sum_{\substack{y\in (x-\frac12,x-\rho) \\ y \in Y(X) }}\frac{1}{y-x} \,.
\end{align}
Notice that in the above sum, the terms containing $x_i+\e$ and $x_i$ do not appear because $x\in (x_i,x_i+\e)$. %
By integrating \eqref{60} with respect to $x \in (x_i,x_i+\e)$, we get that, for $\e$ small enough, the last line in \eqref{3lines} equals
\begin{equation} \label{3l4}
\begin{aligned}
&4\int_{x_i}^{x_i+\e}\sum_{\substack{y\in (x+\rho,x+\frac12] \\ y \in Y(X) } }\frac{1}{y-x}   + \sum_{\substack{y\in (x-\frac12,x-\rho) \\ y \in Y(X) }}\frac{1}{y-x} \, dx\\
&=-4\sum_{\substack{y\in (x_i+\rho,x_i+\frac12] \\ y \in Y(X) }}\log(|y-x_i-\e|)-\log(|y-x_i|)\\
&\;\;\;-4\sum_{\substack{y\in (x_i-\frac12,x_i-\rho) \\ y \in Y(X) }}\log(|y-x_i-\e|)-\log(|y-x_i|).
\end{aligned}
\end{equation}
Putting together \eqref{3l6}, \eqref{3lines}, \eqref{3l5}, \eqref{3l4} we infer
\begin{align}
&\lim_{\e\rightarrow0^+}\frac{1}{\e}(\mathcal E^N_{\rho}(x^\e_1,\dots,x^\e_N)-\mathcal E^N_{\rho}(x_1,\dots,x_N))\nonumber\\
&=-8\Delta(x_i)-4\sum_{  \substack{ y \in (x_i-\frac12,x_i)    \\ y \in Y(X) }  }\frac{1}{|y-x_i|}+4\sum_{ \substack{y\in (x_i,x_i+\frac12]   \\  y \in Y(X)   }}\frac{1}{|y-x_i|}.
\end{align}
where we have also used that $Y(X) \cap (x_i+\rho,x_i+\frac12] = Y(X) \cap  (x_i,x_i+\frac12] $ and also
$Y(X) \cap (x_i-\frac12,x_i - \rho) = Y(X) \cap  (x_i-\frac12,x_i ) $.

As anticipated, the computation in the case $\e<0$ is similar and yields the same limit, hence proving that the quantity above is the first variation of the energy $\mathcal E^N_{\rho}$.

Notice that 
$$
\Delta(x_i)= \sum_{\substack{ y \in (x_i ,x_i + \frac12]    \\ y \in Y(X)}}  \frac{y-x_i}{|y-x_i|}
+\sum_{    \substack{ y \in (x_i  -  \frac12, x_i)    \\ y \in Y(X)}}     \frac{y-x_i}{|y-x_i|}.
$$ 
We get
\begin{align}
\lim_{\e\rightarrow0}\frac{1}{\e} & (\mathcal E^N_{\rho}(x^\e_1,\dots,x^\e_N)-\mathcal E^N_{\rho}(x_1,\dots,x_N))\nonumber\\
&=4\Big(\sum_{\substack{ y \in (x_i - \frac12, x_i)    \\ y \in Y(X)}}\frac{-1 - 2(y -x_i)}{|y -x_i|}+\sum_{\substack{ y \in (x_i ,x_i + \frac12]    \\ y \in Y(X)}}\frac{1- 2(y -x_i)}{|y -x_i|}\Big). \label{kamehameha2}
\end{align}

\textit{Step 2. Rewriting $\mathcal E^N_{\rho}$.}
It is easy to check that the quantity at \eqref{kamehameha2} coincides with the partial derivative with respect to $x_i$ of the functional
\begin{align}\label{primitive}
 \tilde{\mathcal E}^N_{ \rho}(x_1,\dots,x_N) :=2\sum_{i=1}^N\Big(\sum_{\substack{ y \in (x_i-\frac12,x_i+\frac12] \\ y\neq x_i,  \, y\in Y(X) }}-\log(|y-x_i|)+2|y-x_i|\Big).
\end{align}
Notice that if  $ y \in (x_i-\frac12,x_i+\frac12]$, then $y= x_j + k$ for some $x_j\in X$ and $k\in\{-1,0,1\}$, and   $|y-x_i|= |x_j-x_i| \wedge (1- |x_j-x_i|)$. Then,  we can also write the functional $\tilde{\mathcal E}^N_{ \rho}$ in the equivalent way
\begin{align}
\tilde{\mathcal E}^N_{ \rho}(x_1,\dots,x_N)=&2\sum_{i\neq j}(-\log(|x_j-x_i|)\vee(-\log(1-|x_j-x_i|))\nonumber\\
&+4\sum_{i\neq j}|x_j-x_i|\wedge(1-|x_j-x_i|).
\end{align}
Finally, we will also make use of the following  formula
$$
\tilde{\mathcal E}^N_{ \rho}(x_1,\dots,x_N)=2\sum_{k=1}^{N-1}G_k(x_1,\dots,x_N),
$$
where
\begin{multline*}
 G_k(x_1,\dots,x_N):=
 \\
 \sum_{|i-j|\equiv k \;(\textrm{mod }N) }\big((-\log(|x_j-x_i|)\vee(-\log(1-|x_j-x_i|))\big)+2\big(|x_j-x_i|\wedge(1-|x_j-x_i|)\big) \, . 
\end{multline*}

\textit{Step 3. Minimization.}
Here we prove that $\tilde{\mathcal E}^N_{ \rho}$ is minimized when $x_1,\dots,x_N$ are evenly spaced on $\mathcal S^1$. To this purpose, we prove that such  configurations are the (unique)  minimizers of  $G_k$, for each $k=1,\dots,N-1$. First, we observe that the function
\begin{align}
(0,1)\ni y\mapsto f(y):=\big((-\log(|y|)\vee(-\log(1-|y|))\big)+2\big(|y|\wedge(1-|y|)\big)
\end{align}
is strictly convex. 

Now, without loss of generality we assume $x_1 < x_2 < \ldots < x_N$. Then, we fix $k\in\{1,\dots,N-1\}$ and for all $i=1,\dots,N$ we set

\begin{align}
d_{i}:=\begin{cases}
         |x_i-x_{i+k}|&\text{if }i+k\leq N,\\
         1-|x_i-x_{i+k-N}|&\text{if }i+k>N,
        \end{cases}
\end{align}
so that it turns out that 
\begin{align}\label{constraintk}
\sum_{i=1}^Nd_i=k, \qquad G_k(x_1,\dots,x_N)=\sum_{i=1}^Nf(d_i).
\end{align}
Therefore,  by Jensen inequality we deduce that 
$G_k$ is minimized if and only if $d_i=\frac{k}{N}$ for all $i=1,\dots,N$. This is achieved if and only if $\{x_i\}_{i=1}^N$ are evenly spaced (with respect to the distance $d$ on $\mathcal S^1$). 
The proof is achieved.

\section*{Acknowledgments}

Silvio Fanzon gratefully acknowledges support by the Christian Doppler
Research Association (CDG) and Austrian Science Fund (FWF) through the
Partnership in Research project PIR-27 ``Mathematical methods for
motion-aware medical imaging''. %
 We authors are members of  {\it Gruppo Nazionale
per l'Analisi Matematica, la Probabilit\`a e le loro Applicazioni} (GNAMPA) of the {\it Istituto Nazionale di
Alta Matematica} (INDAM).
 We thank the anonymous referee for the useful suggestions.

\bibliography{bibliography}

\bibliographystyle{my_plain}

\end{document}